\sodef\spred{}{.2em}{.9em plus.4em}{1em plus.1em minus.1em}
\newbox\mybox
\def\overtag#1#2#3{\setbox\mybox\hbox{$#1$}\hbox to
  0pt{\vbox to 0pt{\vglue-#3\vglue-\ht\mybox\hbox to \wd\mybox
      {\hss$\ss#2$\hss}\vss}\hss}\box\mybox}
\def\undertag#1#2#3{\setbox\mybox\hbox{$#1$}\hbox to 0pt{\vbox to
    0pt{\vglue#3\vglue\ht\mybox\hbox to \wd\mybox
      {\hss$\ss#2$\hss}\vss}\hss}\box\mybox}
\def\lefttag#1#2#3{\hbox to 0pt{\vbox to 0pt{\vss\hbox to
      0pt{\hss$\ss#2$\hskip#3}\vss}}#1}
\def\righttag#1#2#3{\hbox to 0pt{\vbox to 0pt{\vss\hbox to
      0pt{\hskip#3$\ss#2$\hss}\vss}}#1}
\let\ss\scriptstyle
\def\Dot{\lower.2pt \hbox to 3.5pt{\hss$\bullet$\hss}}
\def\Circ{\lower.2pt \hbox to 3.5pt{\hss$\circ$\hss}}
\def\splicediag#1#2{\xymatrix@R=#1pt@C=#2pt@M=0pt@W=0pt@H=0pt}
\newcommand\lineto{\ar@{-}}
\newcommand\dashto{\ar@{--}}
\newcommand\dotto{\ar@{.}}
\newtheorem{theorem}{Theorem}[section]
\newtheorem{lemma}[theorem]{Lemma}
\newtheorem{proposition}[theorem]{Proposition}
\newtheorem{corollary}[theorem]{Corollary}
\newtheorem{algorithm}[theorem]{Algorithm}
\theoremstyle{definition} 
\newtheorem{definition}[theorem]{Definition} 
\newtheorem{example}[theorem]{Example}
\newcommand{\N}{\mathbbm{N}}
\newcommand{\C}{\mathbbm{C}}
\newcommand{\Z}{\mathbbm{Z}}
\newcommand{\morf}[4][\to]{ #2 \colon #3 #1 #4}
\newcommand{\im}{\operatorname{Im}}
\newcommand{\id}{\operatorname{id}}
\newcommand{\mul}{\operatorname{mult}}
\renewcommand{\phi}{\varphi}
\renewcommand{\epsilon}{\varepsilon}
\begin{document}

\title{Images of maps from $(\C^2,0)$ to $(\C^n,0)$}
\author{Helge M\o{}ller Pedersen}
\address{Departamento de Matem\'atica\\ Universidade Federal do Cear\'a\\
Campus do Pici, Bloco 914 \\ CEP 60455-760,  Fortaleza, CE\\ Brazil
}
\email{helge@mat.ufc.br}

%\thanks{}

\subjclass{14B05, 32S05, 32S25, 32S50, 57K30, 57M10, 57M60}

\keywords{}

\date{}

\begin{abstract} 
Let $\morf{F}{(\C^2,0)}{(\C^n,0)}$ be the germ of a finite map and
$(X,0)$ be its image. We will in this article using the topology of
the link show that $(X,0)$ has to be a quotient singularity if it is
normal. We will also discuss the case when the image is not normal.
Finally we describe the possible topological types, including a
discussion of the groups and examples of how to construct a map to a
given topology. 
\end{abstract}

\maketitle

\bibliographystyle{amsalpha}

\section{Introduction}

The study of germs of holomorphic maps $\morf{f}{(\C^m,0)}{(\C^n,0)}$
is a very active field of study. The case when $m=2$ is of particular
interest. Especially the case of finite determined germs have been
studied a lot. On of the great achievements of singularity theorem is the
classification of finitely determined map germs
$\morf{f}{(\C^2,0)}{(\C^3,0)}$ by Mond \cite{mond1}. Their images have
also seen great interest. It has, for example, been discovered that the
relationship between the image Milnor number and the
$\mathscr{A}$-codimension looks a lot like the relationship between
Milnor and Tjurina numbers for hypersurfaces
\cite{dejongvanstraten,dejongvanstraten,mond2}. For a
pretty recent overview of the theory see
\cite{mondnunoballesterossingularitiesofmappings}. 

Now finitely determined maps are finite and generically one to one, so
in light of this Guillermo Pe\~nafort Sanchis asked the following
question: ``Which germs of surface singularities in $\C^3$ can be the
image of a germ of a finite maps from $\C^2$?'' during the conference:  
60LB Geometry and Singularities -- 60th anniversary of Lev
Birbrair. It is clear that if one wants to try to push Mond's
classification to a more general groups of map germs that are finite and
generically $k$ to one, then one would need an answer to this question.

An obvious example of the image of a finite holomorphic map is the
$A_n$ singularity given 
by $xy=z^n$, which is clearly the image of $(s^n,t^n,st)$. Notice that
$A_n$ is a cyclic quotient singularity, that is the quotient of
$(\C^2,0)$ by $\Z/n\Z$ acting by rotation. In general if $(X,0)$ is
the quotient of $(\C^2,0)$ by an algebraic
group then it follows that the quotient map will have $(X,0)$ as its
image, though it will not necessarily be a singularity in $\C^3$ but
may instead be in $\C^n$ for some $n$.

Javier Fern\'andez de Bobadilla gave us the hint that one should
look at the restriction of the map to a preimage of the link of
$(X,0)$. Which is what we will do in this article.% In the case of the quotient singularities the universal
%covering map of the link is an example of such a map.

The study of 3-manifolds, particularly those that arise as links
of complex surface singularities, is well-established.
%3-manifolds, especially 3-manifolds that can appear as the
%links of complex surface singularity, is a rather well understood
%subject. 
By using the knowledge we have of 3-manifolds, we prove the following.

\begin{theorem}\label{mainthm}
Let $(X,0)\subset (\C^n,0)$ be the germ of a normal surface
singularity. Then the following are equivalent:
\begin{enumerate}[(I)]
\item  $(X,0)$ is  the image of a finite holomorphic map $F:(\C^2,0)\to
 (\C^n,0)$.\label{equivimageofmap} 
\item  $(X,0)$ is a  quotient of $(\C^2,0)$ by a finite subgroup of $U(2)$
 acting linearly.\label{equivquitoinetsing}
\item $\pi_1(L)$ is finite where $L$ is the link of
  $(X,0)$.\label{equivtrivialfundgroup}
\end{enumerate}
\end{theorem}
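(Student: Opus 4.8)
The plan is to prove the cycle of implications (II)$\Rightarrow$(I)$\Rightarrow$(III)$\Rightarrow$(II), isolating the genuinely new content in the middle implication, which realizes Fern\'andez de Bobadilla's suggestion of restricting $F$ to the preimage of the link.

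The implication (II)$\Rightarrow$(I) is the easy one. If $(X,0)=(\C^2/G,0)$ with $G\subset U(2)$ finite acting linearly, then the ring of invariants $\C[u,v]^G$ is finitely generated, and a choice of generators embeds $X$ into some $(\C^n,0)$. The quotient map $\morf{F}{(\C^2,0)}{(X,0)}$ is holomorphic, has image $(X,0)$, and is finite because $\C[u,v]$ is a finite module over $\C[u,v]^G$; hence it is finite as a germ.

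The heart of the argument is (I)$\Rightarrow$(III). Since $F$ is finite we have $F\inv(0)=\{0\}$, so the real-analytic function $\rho=\norm{F}^2$ on $(\C^2,0)$ vanishes only at the origin. For a suitable Milnor ball, $F\inv(B_\epsilon\cap X)$ is a compact neighborhood of the origin in $\C^2$, necessarily a $4$-ball since $\C^2$ is smooth, so its boundary $L'$ is a $3$-sphere and $F$ restricts to a finite surjective map $\morf{f}{S^3}{L}$ onto the link of $(X,0)$. Away from the discriminant curve $\Delta\subset X$ the map $F$ is a local biholomorphism, so $f$ restricts to an honest finite covering over $L\setminus B$, where $B=\Delta\cap L$ is a link in $L$; transverse to each component of $\Delta$ the map $F$ has the local form $z\mapsto z^k$. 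Endowing $L$ with the orbifold structure $L^{\mathrm{orb}}$ carrying the resulting cyclic cone groups along $B$, the map $f$ becomes an orbifold covering of finite degree $d$ from the (trivially orbifolded) manifold $S^3$. Orbifold covering theory then supplies an injection $1=\pi_1(S^3)\hookrightarrow\pi_1^{\mathrm{orb}}(L^{\mathrm{orb}})$ of index $d$, so $\pi_1^{\mathrm{orb}}(L^{\mathrm{orb}})$ is finite; since the natural forgetful map $\pi_1^{\mathrm{orb}}(L^{\mathrm{orb}})\twoheadrightarrow\pi_1(L)$ is surjective, $\pi_1(L)$ is finite as well.

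For (III)$\Rightarrow$(II) I would invoke the classical theory: the link $L$ of a normal surface singularity is a graph manifold (Mumford, Waldhausen, Neumann), and by Waldhausen's classification a graph manifold has finite fundamental group precisely when it is a spherical space form $S^3/G$ with $G$ acting freely and (after geometrization and conjugation) linearly, conjugate into $U(2)$; the cone on such a space form is then the quotient singularity $\C^2/G$, giving (II). The main obstacle is the orbifold step in (I)$\Rightarrow$(III): one must check that the ramification index is constant along each component of $B$ (true generically, with only finitely many exceptional points where components of $\Delta$ meet or the local degree jumps) and absorb these points as orbifold vertices in such a way that the finiteness of $\pi_1^{\mathrm{orb}}(L^{\mathrm{orb}})$ is preserved, together with confirming that the identification $F\inv(L)\cong S^3$ is genuinely compatible with the conical structure of the Milnor ball for $(X,0)$.
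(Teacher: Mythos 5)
Your implication \eqref{equivimageofmap} $\Rightarrow$ \eqref{equivtrivialfundgroup} takes a genuinely different route from the paper (which shows that if $\pi_1(L)$ were infinite then $\pi_3(L)=0$, so $f$ would have topological degree $0$, contradicting the positivity of holomorphic Jacobians), but your orbifold-covering argument has a real gap. For $f\colon S^3\to L$ to be an orbifold covering \emph{from the trivially orbifolded} $S^3$, every preimage point of a component of $B$ with cone order $n$ must have local degree exactly $n$; if the local degrees $k_j$ of the various preimages of one component of $B$ differ (and for finite holomorphic maps they can --- already for $(s,t)\mapsto(s,t^3-st)$ the generic point of the discriminant has one preimage of index $2$ and one of index $1$, and composing with a quotient map produces the same phenomenon over a singular normal image), you are forced to put cone orders $n/k_j>1$ on $S^3$ upstairs. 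The covering then only shows that $\pi_1^{\mathrm{orb}}(L^{\mathrm{orb}})$ contains, with finite index, the orbifold fundamental group of an orbifold whose underlying space is $S^3$ with singular locus a link --- and such groups are frequently infinite (hyperbolic and Seifert-fibred orbifolds over hyperbolic base orbifolds have underlying space $S^3$). So the injection $1\hookrightarrow\pi_1^{\mathrm{orb}}(L^{\mathrm{orb}})$ of finite index, which is the engine of your finiteness conclusion, is exactly what fails; the issue is not only the finitely many exceptional points you flag, but the generic behaviour along each branch component.

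Two further points are asserted rather than proved. First, that $F^{-1}(B_\epsilon\cap X)$ is a $4$-ball with boundary $S^3$ ``since $\C^2$ is smooth'' is not automatic --- a compact neighbourhood of the origin need not be a ball. This is precisely where the paper works hardest (Lemmas \ref{bounderyofsmootcones} and \ref{linktolink}): one shows the preimage is a cone over a graph manifold, deduces from contractibility and the Mayer--Vietoris/van Kampen argument that this link is a homotopy sphere, and only then concludes it is $S^3$; the acknowledgments record that a referee found a gap at exactly this step in an earlier version. Second, in \eqref{equivtrivialfundgroup} $\Rightarrow$ \eqref{equivquitoinetsing} the statement is analytic, not topological: knowing $L\cong S^3/G$ does not by itself give $(X,0)\cong(\C^2/G,0)$ as a germ. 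You need Prill's theorem (or Brieskorn's classification / tautness of these singularities) to upgrade the topological identification to the analytic one, as the paper does.
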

The equivalence between \eqref{equivquitoinetsing} and
\eqref{equivtrivialfundgroup} is a classical result, one direction
follows by a Theorem of Prill
\cite{PrillLocalClasificationsOfQuotients}, the other direction is
even simpler to
observe. It follows from this that the group acting in
\eqref{equivquitoinetsing} is $\pi_1(L)$, and we will list all the
possible groups and the corresponding topologies of $(X,0)$ and its
link. They all turn out to be taut and rational
singularities. Specifically in the case the image is a hypersurface,
it is one of the simple $A_k$, $D_k$, $E_6$, $E_7$ and $E_8$
singularities. We will construct maps that realize the simple
singularities as their images, and give a way to construct maps for
other groups with some examples.

The assumption of \ref{mainthm} that $(X,0)$ is normal is a very
common assumption when spaces are considered, since the normalization does
not change the topological type of an isolated singularity.
%Now \ref{mainthm} has the assumption that $(X,0)$ is normal, which
%for spaces is a very common assumption. 
For maps it turns out that one cannot in general make the assumption
that the image is normal, any finitely determined map that is not an
embedding do in fact have image the is not normal.
Listing all the non normal singularities, that are the images
of finite holomorphic maps, seem out of reach for the moment. But we
provide the following algorithm for determining when a given surface
singularity is the image of a finite holomorphic map:

\begin{algorithm}\label{nonnomralalgo}
Let $(X,0)$ be the germ of an irreducible complex surface singularity. Then the
following steps will determine if it is the image of a finite map:
\begin{enumerate}
\item Determine the normalization $n:(\widetilde{X},0)\to (X,0)$.
\item Calculate the fundamental group of the link $\widetilde{L}$ of the
  $(\widetilde{X},0)$.\label{step2} 
\end{enumerate}
Then $(X,0)$ is the image of a finite map if and only if
$\pi_1(\widetilde{L})$ is finite.

In the case the answer is positive then one obtains a map
$F:(\C^2,0)\to (X,0)$ as $n\circ q$ where $q:(\C^2,0)\to
(\widetilde{X},0)$ is a map realizing $(\widetilde{X},0)$ as the
quotient of $(\C^2,0)$ under the action of $\pi_1(\widetilde{L})$.
\end{algorithm}In step \eqref{step2} one can instead of calculating
$\pi_1(\widetilde{L})$ determine if $(\widetilde{X},0)$ is one of the
singularities we describe in section \ref{topologyandaction}, for example by
determining the topology of $\widetilde{L}$ by obtaining its plumbing
graph as the dual graph of a good resolution. Then $q$ is given
by the maps we discus in section \ref{topologyandaction} up to a change of
coordinates.

We start by reviewing the 3-manifold topology needed to prove
Theorem \ref{mainthm} in section \ref{3manifolds}, and the proof is
then done in Section \ref{quotientsing}. In Section \ref{nonnormal} we
will consider the case of non normal image and give the proof of
\ref{nonnomralalgo}.
In Section \ref{topologyandaction} we will give a
detailed description of the topologies that can appear as the images
of finite map germs including which
groups have them as quotients. We will also construct map realizing all
the simple singularities as images of finite map germs and, show how
to construct the maps in the general case with some examples.

\subsection*{Acknowledgment}
I would like to thank Guillhermo Pe\~nafort for asking the question in
the first place and helping me with calculating the actual maps given
in section \ref{mapstobinarypolyhedral}, Ra\'ul Oset Sinha and  Javier de
Bobadilla discussing it with us, Nu\~no-Ballesteros for confirming
that one should look at the link, and Lev Birbrair, Edson Sampaio and
Andrew de Plessis for further discussion of the problem. I would also
like to thank an anonymous reviewer that pointed ot a gap in the
original proof of Lemma \ref{linktolink}, which lead to the inclusion
of Lemma \ref{bounderyofsmootcones} to close this gap.

Last but not
least I want to express how grateful I am to my former Ph.D.\ advisor
Walter Neumann who taught me all the topology of 3-manifolds used
in this article. He sadly passed away during the time it took from the
finishing of this manuscript to its publication. He will surely be
missed by many, but his memory will live on  it the wonderful
mathematics he gave us.

\section{Review of the topology of 3-manifolds}\label{3manifolds}

In this section we are going to give a short review with reference of
the three dimensional topology that is used in the proof of the main
theorem. We assume that all manifolds are orientable, compact and
without boundaries and will
therefore only state the results in these cases. This
assumption will be satisfied by the manifolds we are concerned with
in later sections, since they are boundaries of complex manifolds.

\begin{definition}
A three dimensional manifold $M$ is called \emph{irreducible} if every
embedded two sphere in $M$ bounds a ball.
\end{definition}

It is Theorem 1 of Neumann in \cite{plumbing} that the link of
a normal complex surface singularity is an irreducible 3 manifold. 

Next we have the Sphere Theorem by Papakyriakopoulos
\begin{theorem}
If $M$ is a 3-manifold and $N$ a $\pi_1(M)$-invariant subgroup of
$\pi_2(M)$ with $\pi_2(M)\setminus N\neq \emptyset$, then there is an
embedding $\morf{g}{S^2}{M}$ such that the class of $g$ in $\pi_2(M)$
is not in $N$. 
\end{theorem}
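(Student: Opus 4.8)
The plan is to run Papakyriakopoulos's \emph{tower construction}, the device that simultaneously yields Dehn's Lemma, the Loop Theorem, and this result. Working in the PL category, I would begin by choosing a map $\morf{f}{S^2}{M}$ whose class lies in $\pi_2(M)\setminus N$, which exists by hypothesis, and putting it in general position, so that its singular set is a closed $1$-complex of finitely many double curves meeting in isolated triple points. Since $f$ factors through a compact regular neighbourhood $M_0$ of $f(S^2)$, and any embedded sphere found inside $M_0$ whose class maps outside $N$ under $\pi_2(M_0)\to\pi_2(M)$ already gives the required sphere in $M$, there is no loss in replacing $M$ by $M_0$; compactness of $M_0$ is exactly what makes the termination argument below work.

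The heart of the argument is to build a tower of covers
\[
M_0 \xleftarrow{\;p_0\;} \widehat{M}_0 \hookleftarrow M_1 \xleftarrow{\;p_1\;} \widehat{M}_1 \hookleftarrow M_2 \xleftarrow{\;p_2\;} \cdots,
\]
where $\widehat{M}_i\to M_i$ is the universal cover and $M_{i+1}$ is a compact regular neighbourhood of the image of the lifted sphere. The lift exists at each stage precisely because $S^2$ is simply connected, so $f_i$ always factors through $\widehat{M}_i$. I would show the tower terminates by exhibiting a complexity (for instance $\dim_{\Z/2}H_2(M_i;\Z/2)$, or a handle count) that strictly drops whenever $\pi_1(M_i)\neq 1$, the key sub-claim being that a compact neighbourhood in a nontrivial cover has strictly smaller such complexity; compactness bounds this quantity, so after finitely many steps the top neighbourhood $M_n$ is simply connected.

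At the top the situation is elementary: $M_n$ is a compact, simply connected $3$-manifold with boundary that deformation retracts onto the image of the sphere, and in such a manifold the relevant second-homology class is carried by an embedded $2$-sphere, obtained by surgering the singular $1$-complex away one innermost double curve at a time. The remaining work is the descent, pushing this embedded sphere down the tower step by step. The inclusions $M_{i+1}\hookrightarrow\widehat{M}_i$ preserve embeddedness, while each projection $p_i$ can create new double curves where distinct sheets of the cover overlap; these are removed by the standard innermost-disk cut-and-paste exchange, which replaces an embedded sphere by a disjoint union of embedded spheres with strictly fewer double curves.

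The main obstacle is the bookkeeping that keeps some surviving sphere outside $N$ throughout the descent. The cut-and-paste splits a class into pieces that differ by the deck transformations of $p_i$, i.e.\ by the $\pi_1(M_i)$-action on $\pi_2$; because $N$ (and its pullback to each level) is assumed $\pi_1$-invariant, a class lies outside $N$ only if at least one of these pieces does, so I can always select an embedded summand whose class is still not in $N$. Tracking this invariance compatibly with the retractions and the induced maps $\pi_2(M_{i+1})\to\pi_2(\widehat{M}_i)\to\pi_2(M_i)$ at every level is the delicate part; once it is in place, the sphere delivered at the bottom of the tower is the desired embedding $\morf{g}{S^2}{M}$ with $[g]\notin N$.
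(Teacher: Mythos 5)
The paper does not actually prove this statement: it is the classical Sphere Theorem of Papakyriakopoulos, quoted verbatim with a pointer to Chapter~4 of \cite{hempel}, and the only thing the paper uses is the immediate Corollary~\ref{spheretheorem}. Your sketch is the tower construction from exactly that source, so you are reconstructing the cited proof rather than anything internal to the paper, and the architecture is right: general position, a tower of nontrivial covers of compact regular neighbourhoods, an embedded sphere at the simply connected top, and a descent by innermost cut-and-paste in which the $\pi_1$-invariance of $N$ is what lets you retain a summand outside $N$ after each exchange. You have correctly isolated where the invariance hypothesis does its work.

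Two of the steps you defer are not routine as stated, and one of them is the crux of the whole theorem. First, termination: it is not clear that $\dim_{\Z/2}H_2(M_i;\Z/2)$ strictly drops when you pass to a compact neighbourhood inside a nontrivial cover, and this is not how the standard proofs terminate the tower. One either shows that the singular set of the lifted map --- a finite union of double curves in $S^2$ --- loses at least one component at every nontrivial covering stage, or one runs the tower with $2$-sheeted covers, which needs the separate lemma that a compact regular neighbourhood of a general-position image with nontrivial fundamental group has $H_1(\,\cdot\,;\Z/2)\neq 0$. As written, your induction has no proven decreasing quantity, so the tower is not known to be finite. Second, at the top you do not need to surger the singular set: since $M_n$ is compact with $\pi_1(M_n)=1$, every component of $\partial M_n$ is a $2$-sphere, $\pi_2(M_n)\cong H_2(M_n)$ is generated by these boundary classes (the long exact sequence of the pair plus $H^1(M_n)=0$), and $[f_n]$ lies outside the pullback of $N$, so some already-embedded boundary sphere does too. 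Starting the descent from a boundary sphere is both cleaner and what makes the bookkeeping in your final paragraph tractable. With those two repairs your outline becomes the standard proof that the paper's reference supplies.
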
(For more on this Theorem see for example Chapter 4 of
\cite{hempel}) Noticed that the trivial subgroup of $\pi_2(M)$ is
$\pi_1(M)$-invariant. So if $\pi_2(M)$ is not trivial, then the
theorem stats that there is an embedded $S^2$ representing a non
trivial element of $\pi_2(M)$. But if $M$ is irreducible then any
embedded $S^2$ bounds a ball, and hence represents the trivial element
of $\Pi_2(M)$. So we can conclude.

\begin{corollary}\label{spheretheorem} 
If $M$ is an irreducible 3-manifold, then $\pi_2(M)$ is trivial.
\end{corollary}

Next we need some results about the types of irreducible 3-manifolds
that can be the link of a complex surface singularity.

\begin{definition}
A Seifert fibration is a triple $(M,f,\Sigma)$ where $M$ is a
3-manifold, $\Sigma$ is a surface and $\morf{f}{M}{\Sigma}$ is
smooth map with the following properties:
\begin{enumerate}
\item For every $x\in \Sigma$ there is a disc neighbourhood $D$ of $x$
  such that $f^{-1}(D)\cong S^1\times D$.
\item If $(t_1,rt_2)$ are polar coordinates on $f^{-1}(D)\cong S^1\times D$
  then thre exist $p,q\in \Z$ depending on $x$ with $\gcd(p,q)=1$ and
  $p\neq 0$ such that $f(t_1,rt_2)=rt_1^qt_3^p$.
\end{enumerate}
A 3-manifold that posses a Siefert fibration is called a Seifert
fibred 3-manifold.
\end{definition}Examples of Seifert fibred manifolds are $S^3$, with
the hop-fibration, circle-bundles in general, lens spaces whit a
Seifert fibration coming as a quotient of the Hopf-fibration of $S^3$.

The definition of Seifert fibred manifolds does not assume that
$\Sigma$ is an oriented surface, but in our case $\Sigma$ will
correspond to complex curves and therefore has to be oriented. So we will assume
the $\Sigma$ is oriented. Except for the lens spaces (here we count
$S^3$ and $S^1\times S^2$) then any homeomorphism of a Seifert fibred
manifold preserves the Seifert fibration. (Here again is used that the
base surface is oriented, if not there are a few extra exceptions)

A fibre of the Siefert fibration is called singular if the value of
$p$ not equal to $+1$. Since $M$ is compact then the number of
singular fibres is finite. It follows that any Siefert fibration can
then be encoded by the Seifert data which can be put in a normal form
as $\bigl(g,(1,b),(p_1,q_1),\dots,(p_1,q_n)\bigr)$. Here $g$ is the
genus of $\Sigma$, $(p_1,q_i)$ are invariants of the singular fibres
with $\gcd(p_1,q_i)=1$ and $1\leq q_i<p_i$ ($p_i$ is the same as in
the definition, but $q_i$ is different) and $b$ is the euler number of
the fibration of the compliment of the singular fibres. For a lens
space $L(p,q)$ we simple associate the Seifert invariants
$\bigl(0,(1,0),(p,q)\bigr)$. There are different conventions of how to
present the Seifert invariants, and we will follow the one from
\cite{Neu07}, see also \cite{brandies} for a detailed construction but be aware they use to oposite convention for the Seifert invariants. We will use the Seifert invariants in section
\ref{topologyandaction} when we want to see the precise topology of
the images of finite holomorphic maps.

\begin{definition}
Let $M$ be a 3-manifold, then an embedded surface
$\Sigma\in M$  is called \emph{compressible}, if on of the following holds:
\begin{enumerate}
\item $\Sigma$ is s sphere and it bounds a ball.
\item There is an immersed disc $D\subset M$ such that $D\cap \Sigma
  = \partial D$ and $\partial D$ is not contractible in $\Sigma$.
\end{enumerate}$\Sigma$ is called \emph{incompressible} if it is not
compressible.
\end{definition} Incompressible surfaces have the following property
which will be used later, for proof of this see for example
\cite{hempel} Corollary 6.2.
\begin{proposition}\label{injectionofincompressiblesurfaces}
Let $\Sigma\subset M$ be an oriented incompressible surface, then
$\ker(\pi_1(\Sigma) \mapsto \pi_1(M))$ is trivial. 
\end{proposition}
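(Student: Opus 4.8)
The plan is to prove the statement by contraposition using the Loop Theorem of Papakyriakopoulos. Suppose the kernel is nontrivial, so there is a loop $\gamma$ in $\Sigma$ whose class is nontrivial in $\pi_1(\Sigma)$ but which is null-homotopic in $M$. If $\Sigma$ is a sphere then $\pi_1(\Sigma)$ is already trivial and there is nothing to prove, so I may assume $\Sigma$ has positive genus and hence that $\gamma$ represents a non-contractible class.

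First I would use that $M$ is orientable and $\Sigma$ is oriented to conclude that $\Sigma$ is two-sided in $M$. Cutting $M$ along $\Sigma$ produces a compact $3$-manifold $M'$ whose boundary contains a copy of $\Sigma$, and the null-homotopy of $\gamma$ in $M$ survives in $M'$, so that $\gamma$ still lies in $\ker\bigl(\pi_1(\Sigma)\to\pi_1(M')\bigr)$ and is still essential in $\Sigma$. Next I would apply the Loop Theorem to the boundary surface $\Sigma\subset\partial M'$: since this kernel is nontrivial there is a proper embedding $\morf{g}{(D^2,\partial D^2)}{(M',\Sigma)}$ with $[g|_{\partial D^2}]$ nontrivial in $\pi_1(\Sigma)$. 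Viewing $D=g(D^2)$ back inside $M$, its interior is disjoint from $\Sigma$, so $D\cap\Sigma=\partial D$, while $\partial D$ is not contractible in $\Sigma$. This $D$ is exactly a disc of the type forbidden by the second clause of the definition of compressible, contradicting the assumption that $\Sigma$ is incompressible.

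The step I expect to be the main obstacle is invoking the Loop Theorem correctly, since it is classically stated for a surface lying in the boundary of a $3$-manifold rather than in the interior; the cutting construction together with the two-sidedness coming from orientability is precisely what reduces the interior case to the boundary case. I would also remark that the embedded disc produced by the Loop Theorem is in particular an immersed disc, so it genuinely violates the (weaker, immersed-disc) incompressibility hypothesis used in the definition above. For the remaining details one may follow \cite{hempel}, Chapter~6.
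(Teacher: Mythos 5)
Your overall strategy---contrapositive, two-sidedness from orientability, cutting along $\Sigma$, and the Loop Theorem---is exactly the standard argument behind the reference the paper actually cites for this statement (Hempel, Corollary 6.2); the paper itself gives no proof. However, there is one genuine gap: the assertion that ``the null-homotopy of $\gamma$ in $M$ survives in $M'$.'' A null-homotopy of $\gamma$ in $M$ is a singular disc whose interior may meet $\Sigma$, so it does not descend to the cut-open manifold $M'$, and the claim $\gamma\in\ker\bigl(\pi_1(\Sigma)\to\pi_1(M')\bigr)$ is false in general. Concretely, let $\Sigma$ be a Heegaard torus in $S^3$ and $\gamma$ a $(1,1)$-curve on it: $\gamma$ is essential in $\Sigma$ and null-homotopic in $S^3$, but it maps to a generator of $\pi_1$ of each complementary solid torus, hence is not null-homotopic in either piece of the cut manifold. (Of course this $\Sigma$ is compressible, as it must be, but the compressing disc bounds the meridian, not $\gamma$---which shows your argument cannot in general produce a disc bounding the particular curve you started with, only \emph{some} essential curve.)

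The standard repair is an innermost-circle argument. Take a map $\morf{f}{D^2}{M}$ with $f|_{\partial D^2}=\gamma$ and put it in general position, so that $f^{-1}(\Sigma)$ is $\partial D^2$ together with finitely many circles in the interior of $D^2$. If there are no interior circles, the interior of the singular disc misses $\Sigma$, it lifts to $M'$, and your application of the Loop Theorem goes through verbatim. Otherwise choose an innermost interior circle $c$, bounding a subdisc $D_c$ with $f(D_c)$ lying on one side of $\Sigma$. If $f(c)$ is essential in $\Sigma$, then $\ker\bigl(\pi_1(\Sigma)\to\pi_1(M')\bigr)$ is nontrivial and the Loop Theorem yields a compressing disc whose boundary is some essential curve in $\Sigma$, which is all that compressibility requires. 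If $f(c)$ is inessential in $\Sigma$, redefine $f$ on $D_c$ by a null-homotopy in $\Sigma$ pushed slightly off $\Sigma$, reducing the number of interior intersection circles, and repeat. With this insertion the proof is correct; the rest of your argument---the reduction of the interior case to the boundary case via cutting, and the remark that the embedded disc produced by the Loop Theorem is in particular an immersed disc of the kind forbidden by the definition---is fine.
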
 The main reason for introducing incompressible
surface is the following decomposition theorem for irreducible three
manifolds by Jaco and Salen \cite{jacosalen} and independetly Johannson
\cite{johannson}. 

\begin{theorem}[JSJ-Decomposition]
Let $M$ be a irreducible 3-manifold, then there exist a maximal
sets of pairwise
non isotopic incompressible tori $T_1,\dots T_k$, which is unique up to
isotopy and reordering, such that $M\setminus \bigl(T_1\cup\dots\cup
T_k\bigr)$ is a collection of Seifert fibred (with solid tori consisting of fibres removed) or atoroidal manifolds.
\end{theorem}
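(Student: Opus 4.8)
The plan is to establish existence of the torus system first and its uniqueness second, leaning on the two standard pillars of Haken $3$-manifold theory: a finiteness statement bounding how many disjoint non-parallel incompressible surfaces a compact irreducible manifold can carry, and the Torus Theorem that identifies exactly which toroidal pieces resist further cutting as being Seifert fibred.

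For existence, I would first invoke Kneser--Haken finiteness: there is an integer $N=N(M)$ such that any family of more than $N$ pairwise disjoint, pairwise non-isotopic, incompressible and non boundary-parallel tori in $M$ must contain an isotopic pair. The irreducibility of $M$, equivalently the vanishing of $\pi_2(M)$ from Corollary~\ref{spheretheorem}, is what keeps the normal-surface and intersection arguments behind this bound under control. Finiteness then yields a maximal disjoint collection $T_1,\dots,T_k$ of such tori, and Proposition~\ref{injectionofincompressiblesurfaces} guarantees each $T_i$ is $\pi_1$-injective, so that cutting is well behaved and the complementary pieces $M_i=\overline{M\setminus(T_1\cup\dots\cup T_k)}$ are again irreducible.

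The heart of the argument, and the main obstacle, is to show that each $M_i$ is either atoroidal or Seifert fibred. If $M_i$ contains no essential torus there is nothing to prove; otherwise $M_i$ is toroidal, and by maximality of $\{T_j\}$ no essential torus of $M_i$ can be isotoped off the $T_j$ so as to enlarge the system. The content of this dichotomy is precisely the \emph{Torus Theorem} of Waldhausen, Jaco--Shalen and Johannson: an orientable irreducible $3$-manifold that contains an essential torus but admits no such further cutting must be Seifert fibred, after removing if necessary solid-torus neighbourhoods of exceptional fibres meeting the boundary. Concretely I would produce this through the \emph{characteristic submanifold}, the maximal Seifert-fibred $\Sigma\subset M_i$ with incompressible frontier, which enjoys the enclosing property that every essential torus or annulus can be homotoped into it; when $M_i$ is toroidal one then shows $\Sigma=M_i$. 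The existence and properties of the characteristic submanifold are the genuinely deep input here and would absorb essentially all of the real work, so in a review section I would cite \cite{jacosalen} and \cite{johannson} for this step rather than reprove it.

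For uniqueness up to isotopy and reordering, suppose two systems both satisfy the conclusion. Putting them in general position, I would remove their circles of intersection by an innermost-disc argument, using irreducibility (that is, $\pi_2(M)=0$) together with the incompressibility of both systems, until the two families are disjoint. Within each Seifert-fibred piece one then uses that an essential torus can always be isotoped to a \emph{vertical}, fibre-saturated position; this lets one match the tori of one system to those of the other and assemble a global isotopy carrying one system onto the other, up to reordering.
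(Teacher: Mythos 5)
The paper does not actually prove this statement: it is quoted as a classical theorem of Jaco--Shalen and Johannson, with \cite{jacosalen} and \cite{johannson} cited in the surrounding text, so there is no internal proof to compare yours against. Your outline is a faithful sketch of the standard Haken-theoretic route --- Kneser--Haken finiteness for existence, the characteristic submanifold and the Torus Theorem for the Seifert/atoroidal dichotomy, and an innermost-disc plus vertical-isotopy argument for uniqueness --- and you are right that the characteristic submanifold is the genuinely deep input that a review section should cite rather than reprove. In that sense your treatment is compatible with, and more informative than, the paper's bare citation.

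There is, however, one genuine gap if this were to stand as a proof. A maximal family of pairwise disjoint, pairwise non-isotopic incompressible tori produced by Haken finiteness is in general \emph{not} unique up to isotopy: a Seifert fibred piece over a base with three or more exceptional fibres contains many pairwise non-isotopic vertical essential tori, and different maximal families cut it in genuinely different ways. The canonical JSJ family is the \emph{minimal} family whose complementary pieces are Seifert fibred or atoroidal (equivalently, the frontier of the characteristic submanifold), and its uniqueness is proved from the enclosing property of that submanifold, not from maximality. Your existence step (take a maximal family) and your uniqueness step (isotope two families off each other and match tori) therefore do not apply to the same object; as written, the uniqueness argument would fail for the family that your existence argument produces. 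The theorem as stated in the paper, with ``maximal sets \dots which is unique'', carries the same imprecision, so you have reproduced rather than introduced the issue, but a correct proof must be phrased in terms of the canonical (minimal, or enclosing) torus system.
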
A manifold $M$ is atoriodal if every embedded tori either
bounds a solid tori or is isotopic to a boundary component of $M$.

\begin{definition}
An irreducible 3-manifold is called a \emph{graph manifold} if
every piece of its JSJ-decomposition is Seifert fibred.
\end{definition} There is a class of 3-manifolds called
\emph{plumbed 3-manifolds} which by results of Waldhausen is the
same as graph manifolds \cite{waldhausen}. It follows from classical result in
resolution of surface singularities \cite{duval} that the link of a
normal surface singularity is a plumbed 3-manifold given by the
dual resolution graph, and hence a
graph manifold. We will describe the plumbing construction in more
detail in section \ref{topologyandaction}, where we use it to study the
links of the images of finite maps by giving their dual resolution graphs.

\section{Maps from $S^3$}\label{quotientsing}

To prove the main Theorem we want to reduce to topological arguments,
and we do that by looking at the map induced by
$\morf{f}{(\C^2,0)}{(\C,0)}$ between the ``links'' of $(\C^2,0)$ and
$(\im f, 0)$. So one needs that $f$ in fact induces such maps. This
seems to be the case in general for finite maps, not just for the case
the domain is $\C^2$. The result we could in the literature
is part $(i)$ of Theorem 3 by Fukuda in the article
\cite{fukuda}, which shows it under a generality condition on $f$,
which we are not sure contains all finite map germs. He also does not
state to prove it in the case of $(\C^2,0)$, but that is just because the
Poincare Conjecture was not proven in that dimension when he wrote his article.

We will give a proof that works for all finite maps, but our proof
only works in the case of 
maps from $(\C^2,0)$ since it will rely on 3-dimensional topology. We
start with a very general lemma which is probably know to experts and
seem to be implicitly used in Fukuda's result.

%Now our map germs are not finitely determined in general, and his proof
%does actually not cover the case of $\C^2$. But that is mostly because
%he uses the Poincare Hypothe\-sis that at the time of publication of his
%article was not know to be true for three and four manifolds. On the
%other hand his proof seems to have a gap. He constructs a vector fields,
%which he integrates to say that the preimage of the link by $f$ is the
%boundary of a contractible manifold. He then concludes that the
%preimage of the link $\widetilde{L}$ is a homotopy sphere. That
%conclusion does not follow from being the boundary of a contractible
%manifold. It only follows that $\widetilde{L}$ is a homology sphere,
%and there does indeed exist homology 3-spheres which are boundaries of
%contractible manifolds in $\R^4$, for example the Brieskorn homology
%sphere $\Sigma(2,5,7)$ (which is the Seifer fibred manifold with
%Seifert invariants $(0,(1,-1),(2,1),(5,1),(7,2)$) bounds a contractible
%manifold in $\R^4$, see \cite{akbulutkirby,cassonharer}. Now Fukuda's
%proof can be corrected by
%using the following lemma, since his construction does make
%%$\widetilde{L}$ satisfy more than just being the boundary of a
%contractible manifold, and it is very likely something like this
%Lemma he had in mind when he wrote his article.

\begin{lemma}\label{bounderyofsmootcones}
Let $L$ be a finite $n$-dimensional CW-complex and $X:=L\times
[0,1]/(x,0)\sim (y,0)$ be the cone over $L$. Suppose that $X\setminus
L$ is and topological manifold, then $L$ is a homotopy sphere. In
particularly if $L$ is a manifold then it is homeomorphic to $S^n$.
\end{lemma}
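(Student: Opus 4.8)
The plan is to exploit the fact that the open cone $M:=X\setminus L$ is an $(n+1)$-dimensional topological manifold and to read off the homotopy type of $L$ from the local structure of $M$ at the cone point $v$ (the image of $L\times\{0\}$). First I would record two elementary observations: $M$ is contractible, being a cone, and $M\setminus\{v\}=L\times(0,1)$ deformation retracts onto $L$. Since $M$ is a manifold, its local homology at $v$ agrees with that of Euclidean space, $H_k(M,M\setminus\{v\})\cong H_k(\R^{n+1},\R^{n+1}\setminus\{0\})$, which is $\Z$ for $k=n+1$ and $0$ otherwise. Feeding this into the long exact sequence of the pair $(M,M\setminus\{v\})$ and using $\tilde H_*(M)=0$ yields $\tilde H_{k-1}(L)\cong H_k(M,M\setminus\{v\})$ for all $k\ge 1$, so $L$ is a homology $n$-sphere; in particular $\tilde H_0(L)=0$, so $L$ is connected once $n\ge 1$. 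The cases $n=0,1$ are immediate (two points, respectively a connected finite graph with $H_1\cong\Z$, hence $\simeq S^1$) and I would dispose of them separately.

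Next I would establish that $L$ is simply connected, which is the step that upgrades \emph{homology sphere} to \emph{homotopy sphere}. Assume $n\ge 2$. Choose a Euclidean neighbourhood $U\cong\R^{n+1}$ of $v$, so that $U\setminus\{v\}\simeq S^n$ is simply connected, and so that $U\setminus\{v\}$ is path-connected. Applying van Kampen's theorem to the open cover $M=(M\setminus\{v\})\cup U$ gives $\pi_1(M)\cong\pi_1(M\setminus\{v\})\ast_{\pi_1(U\setminus\{v\})}\pi_1(U)=\pi_1(M\setminus\{v\})$, since both $U$ and $U\setminus\{v\}$ are simply connected. As $M$ is contractible, $\pi_1(M\setminus\{v\})=1$, and hence $\pi_1(L)=1$. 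Now $L$ is a simply connected finite CW-complex with the homology of $S^n$; by the Hurewicz theorem a generator of $H_n(L)\cong\Z$ is represented by a map $S^n\to L$ inducing isomorphisms on all homology groups, and Whitehead's theorem then promotes it to a homotopy equivalence. Thus $L$ is a homotopy sphere, which proves the first assertion.

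Finally, if $L$ is in addition an $n$-manifold, then $L$ is a closed manifold homotopy equivalent to $S^n$, and the conclusion $L\cong S^n$ is exactly the topological generalized Poincar\'e conjecture: classical for $n\le 2$, Perelman for $n=3$, Freedman for $n=4$, and Smale together with its successors for $n\ge 5$. I expect the genuine obstacle to be precisely this last invocation: the homotopy-theoretic part is soft, but recognizing a homotopy sphere as the standard sphere is the deep input, and in the dimension relevant to links of normal surface singularities ($n=3$) it rests on Perelman's solution of the Poincar\'e conjecture, which is exactly what was unavailable to Fukuda at the time of his article.
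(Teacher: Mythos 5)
Your proposal is correct and follows essentially the same route as the paper's proof: both extract $H_*(L)\cong H_*(S^n)$ from the contractibility of the cone together with the Euclidean local structure at the cone point (the paper via Mayer--Vietoris on $X=L^*\cup B$, you via the long exact sequence of the pair and local homology, which is the same computation), both get $\pi_1(L)=1$ by van Kampen with a Euclidean neighbourhood of the vertex, and both finish with Whitehead and the generalized Poincar\'e conjecture. Your version is slightly more careful in two places the paper glosses over --- the low-dimensional cases $n=0,1$ and the Hurewicz step needed to produce the map $S^n\to L$ before invoking Whitehead --- but these are refinements of the same argument, not a different one.
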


\begin{proof}
Let $x^*\in X$ be the cone point and $B$ a open ball neighbourhood of $x^*$ in
$X$. Let $L^*:=X\setminus\{x^*\}$ and $B^*:=B\setminus\{x^*\}$. Then $X=
L^*\cup B$ and $L^*\cap B = B^*$. Now $X$ nad $B$ are contractible so it
follows from the Meyer-Vietoris sequence that the inclusion of $B^*$
in $L^*$ induces the isomorphism $H_i(L^*)\cong
H_i(B^*)$ for all $i$. But $L^*$ is homotopic to $L$ and $B^*$ is homotopic to
$S^n$. So $H_i(L)\cong H_i(S^n)$ for all $i$. Since $B^*$ is simply
connected it follows from Seifert-Van Kampen's Theorem that $\pi_1(X)
\cong \pi_1(B)*\pi_1(L^*) \cong \pi_1(L)$, here we use $n>2$ but it is
easy to modify the argument in the case $n=2$. But $X$ is
contractible so $\pi_1(L)$ is trivial. Since $L$ is a finite CW-complex
it follows form Whitehead's Theorem that it is homotopic to
$S^n$. Then if $L$ is a manifold it is homeomorphic to $S^n$ by the
Poincare Hypothesis.
\end{proof} In our case we do not actually need the Poincare
Conjecture to draw the last conclusion. As we will see later, graph
manifolds with trivial fundamental group being homeomorphic to $S^3$
was know long before the proof of the Poincare Conjecture in three
dimensions by Perelman.

\begin{lemma}\label{linktolink}
Let $F:(\C^2,0)\to (\C^n,0)$ be a germ of a finite holomorphic
map and $L_\epsilon = S^{2n-1}_\epsilon \cap \im F$ be the link of
$(\im F,0)$.  Then there exist an $\epsilon_0$ such that for
$0<\epsilon<\epsilon_0$ the preimage
$\widetilde{S}:=F^{-1}(L_\epsilon)$ is homeomorphic to $S^3$.
\end{lemma}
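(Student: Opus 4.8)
The plan is to reduce the statement to Lemma~\ref{bounderyofsmootcones} by studying the real analytic function $\rho := |F|^2 \colon (\C^2,0) \to (\R_{\geq 0},0)$, $\rho(z) = \sum_{i=1}^n |F_i(z)|^2$. Since $F(z) \in \im F$ for every $z$, the preimage $\widetilde S = F^{-1}(L_\epsilon) = F^{-1}(S^{2n-1}_\epsilon)$ is exactly the level set $\rho^{-1}(\epsilon^2)$. Because $F$ is finite, on a small enough representative it is proper with $F^{-1}(0) = \{0\}$; hence $\rho$ is proper and $\rho^{-1}(0) = \{0\}$, so $\rho^{-1}([0,c])$ is a compact neighbourhood of $0$ shrinking to $\{0\}$ as $c \to 0$.

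First I would show that $\epsilon^2$ is a regular value of $\rho$ for all small $\epsilon>0$, so that $\widetilde S$ is a smooth compact $3$-manifold. Writing $\rho = \sum_i F_i \overline{F_i}$ and using that the $F_i$ are holomorphic, the condition $\nabla\rho(z)=0$ is equivalent to $\sum_i (\partial F_i/\partial z_j)(z)\,\overline{F_i(z)} = 0$ for $j=1,2$; let $\Sigma = \{z \neq 0 : \nabla\rho(z)=0\}$ be the critical locus. If $0 \in \overline{\Sigma}$, the curve selection lemma yields a nonconstant real analytic arc $\gamma \colon [0,\delta) \to \C^2$ with $\gamma(0)=0$ and $\gamma(t)\in\Sigma$ for $t>0$. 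Then $\frac{d}{dt}\,\rho(\gamma(t)) = \langle \nabla\rho(\gamma(t)), \gamma'(t)\rangle = 0$ for $t>0$, so $\rho\circ\gamma$ is constant, equal to $\rho(0)=0$; this forces $\gamma(t)\in\rho^{-1}(0)=\{0\}$, a contradiction. Hence $0\notin\overline{\Sigma}$, and choosing $\epsilon_0$ so that $\rho^{-1}([0,\epsilon_0^2])$ meets $\Sigma$ only at $0$, every $\epsilon^2\in(0,\epsilon_0^2)$ is a regular value.

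Next I would establish the conic structure: that $\mathcal C := \rho^{-1}([0,\epsilon^2])$ is homeomorphic to the cone over $\widetilde S$ with cone point $0$. Since $\rho$ has no critical points on $\mathcal C\setminus\{0\}$, one integrates a suitably rescaled version of $\nabla\rho/|\nabla\rho|^2$, using the \L{}ojasiewicz gradient inequality to guarantee that the flow lines converge to $0$, producing a homeomorphism $\mathcal C \cong \widetilde S \times [0,1]/(\widetilde S\times\{0\})$. This is the step I expect to be the main obstacle, since controlling the flow near the cone point is the delicate issue (and is exactly what Lemma~\ref{bounderyofsmootcones} is designed to circumvent). Granting it, set $X=\mathcal C$ and $L=\widetilde S$: then $X$ is the cone over the $3$-dimensional CW-complex $\widetilde S$, while $X\setminus L = \rho^{-1}([0,\epsilon^2))$ is an open subset of $\C^2=\R^4$ and hence a topological manifold. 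Lemma~\ref{bounderyofsmootcones} then applies with $n=3$, so $\widetilde S$ is a homotopy sphere; being a smooth manifold, it is homeomorphic to $S^3$.
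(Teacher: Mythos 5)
Your argument is correct, but it reaches Lemma \ref{bounderyofsmootcones} by a genuinely different route from the paper. The paper works downstairs in $\C^n$: it applies the conic structure theorem to the image away from the discriminant curves $C_i$, lifts the product structure through the covering map off the discriminant and through branched covers on solid-torus neighbourhoods of the links $K_i$, and glues these pieces to exhibit $\widetilde{S}$ as a graph manifold and $F^{-1}(\overline{B}_\delta\cap \im F)$ as the cone over it. You instead work upstairs in $\C^2$ with the pulled-back distance function $\rho=\lvert F\rvert^2$, which is a proper real-analytic ``rug function'' with $\rho^{-1}(0)=\{0\}$: the curve selection lemma gives regular values, and the gradient flow gives the cone structure directly. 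This is essentially the vector-field strategy the paper itself attributes to Fukuda in the remark following its proof, and it has the advantage of being dimension-independent. Two comments. First, the step you flag as the main obstacle is milder than you fear: once $\epsilon_0$ is chosen so that $\rho$ has no critical points on $\rho^{-1}((0,\epsilon_0^2])$, the trajectories of $-\nabla\rho/\lvert\nabla\rho\rvert^2$ stay in the compact set $\mathcal{C}$, satisfy $\rho(\phi_t(x))=\epsilon^2-t$, and hence have all their accumulation points in $\rho^{-1}(0)=\{0\}$; properness of $\rho$ then gives uniform convergence to the cone point, so no \L{}ojasiewicz estimate is actually needed --- this is the standard local conic structure lemma for rug functions (cf.\ Durfee's work on neighbourhoods of algebraic sets). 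Second, what you lose relative to the paper is the identification of $\widetilde{S}$ as a graph manifold; the paper uses that extra structure to observe that the final implication ``homotopy $3$-sphere $\Rightarrow$ $S^3$'' was known for graph manifolds long before Perelman, whereas your argument genuinely invokes the $3$-dimensional Poincar\'e conjecture. Since that is now a theorem, your proof of Lemma \ref{linktolink} stands.
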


\begin{proof}
We chose a proper representative of the $F$, which we denote by
$f$, such that $f^{-1}(0)=\{0\}$. Which is possible since $F$ is
the germ of a finite map. Then we can
choose $B_{\epsilon_0}\subset \C^n$ such that the set of critical values
$\Sigma(f)$ of $f$ is a finite union of irreducible complex curves
$C_1$, \dots $C_k$ in $X_{\epsilon_0} := B_{\epsilon_0} \cap \im
f$. Let $l_\epsilon := S_\epsilon^{2n-1}\cap \im f$ be the link of
the image of the representative $f$. Then it is clear that
$l_\epsilon$ is homeomorphic to $L_\epsilon$ for $\epsilon < \epsilon_0$. 

Moreover, since we are working with germ we can choose our
representatives such that each irreducible component $C_i\subset \Sigma(f)$ is
homeomorphic to a cone over its link $K_i$, which is a knot in
$l_\epsilon$. We can
also choose the representative so small, such that $\Sigma(f)$
intersects $S^{2n-1}_\epsilon$ transversely for $\epsilon <\epsilon_0$. Then
the conical structure theorem implies that $X_{\epsilon_0}\setminus
\Sigma(f)$ is homeomorphic to 
$\hat{L}\times (0,\epsilon_0)$, where $\hat{L} = l_\epsilon\setminus
\bigcup_{i=1}^k K_i$. This is of course only true if $0$ is a critical
value, but if 
$0$ is not a critical value then $f$ is an embedding and the result is
trivial.

This implies that $f\vert_{f^{-1}\bigl(X_{\epsilon_0}\setminus
  \Sigma(f)\bigr)} : f^{-1}\bigl(X_{\epsilon_0}\setminus \Sigma(f)\bigr) \to
\hat{L}\times (0,\epsilon_0)$ is a covering map. So $
f^{-1}\bigl(X_{\epsilon_0}\setminus \Sigma(f)\bigr)$ is homeomorphic to
$\widetilde{L}\times (0,\epsilon_0)$ where $\widetilde{L}$ is a finite
cover of $\hat{L}$.

Now $\hat{L}$ is a graph manifold with a finite number of knots
removed, and by the transversality of $\Sigma(f)$ and
$S^{2n-1}_\epsilon$ we have that these knots are fibres of Seifert
fibred pieces of $\hat{L}$. Since $\widetilde{L}$ is a finite cover
of $\hat{L}$ we also have that it is a graph manifold with a finite
number of fibres removed from its Seifert fibred pieces.
 
Let $D_i$ be a conical neighbourhood of $C_i\setminus \{0\}$. That is
$D_i$ is homeomorphic to a tubular neighbourhood of the image of
$C_i\setminus \{0\}$ under the homeomorphism $X_{\epsilon_0} \setminus
\{0\}\approx l_\epsilon\times (0,\epsilon_0)$ and $D_i\cap D_j = \emptyset$ if
$i\neq j$. 

Topological $D_i$ is just a cylinder times a disc with the
central cylinder being the set of critical values. We can also choose
$D_i$ such that $D_i\cap S^{2n-1}_\epsilon$ is a solid torus
neighbourhood homeomorphic to a neighbourhood $T_i$ of the
singular fibre $K_i$ in a Seifert fibred piece of $l_\epsilon$,
one of the fibres removed in $\hat{L}$. This means that
$f\vert_{f^{-1}\bigl(D_i\bigr)} : f^{-1}\bigl(D_i\bigr) \to
D_i$ is a branched cover branched along the central cylinder. So we get
that its connected components are also cylinders times disks, 
and in particular that $f^{-1}\bigl(T_i\bigr)$ is
a union of solid tori neighbourhoods of the components of
$f^{-1}\bigl(K_i\bigr)$.  
  
Since the fibrations $f\vert_{\widetilde{L}}$ and
$f\vert_{f^{-1}(T_i)}$ comes from the restrictions they agree on
$\widetilde{L}\cap f^{-1}(T_i)$ for each $i$. Hence we get that
$\widetilde{S}= f^{-1}(L) =\widetilde{L}\cup \bigcup_{i=1}^k
  f^{-1}(T_i)$ is a graph manifold. Technically the structure we get
  by the gluing might just be the structure of a \emph{graph
    orbifold} (see \cite{myarticle}), but every graph orbifold have an
  underlying space that is a graph manifold, so the topological
  argument that follows is not a problem. 

These considerations implies that $f^{-1}(X_{\epsilon_0}\setminus
\{0\})$ is homeomorphic to $\widetilde{S}\times (0,\epsilon_0)$. Now
choosing a closed ball $\overline{B}_\delta\subset B_{\epsilon_0}$, then
$f^{-1}\bigl((\overline{B}_\delta\cap \im(F))\setminus \{0\}\bigr)$ is
homeomorphic to $\widetilde{S}\times (0,\delta]$. Since we have
chosen $f$ proper and $f^{-1}(0)=\{0\}$ it follows that
$f^{-1}(\overline{B}_\delta\cap \im(f))$ is the cone over
$\widetilde{S}$. Since $f^{-1}(\overline{B}_\delta\cap
\im(F))\setminus \widetilde{S}$ is an open subset of $\C^2$ the result
follows from Lemma \ref{bounderyofsmootcones}

%So $\widetilde{S}$ is the boundary of a contractible neighbourhood of
%the origin in $\C^2$, and hence homotopy equivalent to $S^3$. But any
%graph manifold that is a homotopy sphere is a sphere. This follows
%since more than one Seifert fibred pices in the JSJ-decomposition of
%$\widetilde{S}$ implies the existence of an essential torus $T
%\subset \widetilde{S}$. But essential tori are incompressible, which means that
%$\pi_1(T)$ injects into $\pi_1(\widetilde{S})$. Hence
%$\pi_1(\widetilde{S})$ has to be infinite. So $\widetilde{S}$ is
%Seifert fibred, and by the classification of Seifert fibred manifolds
%\cite{seifertTopologyofThreeDimensionalFibreSpaces} only $S^3$ have trivial
%fundamental group.
\end{proof}
Here we constructed the cone structure on the preimage by topological
methods specific to three dimensional manifolds. To prove this theorem
in any dimension the best way would probably follow what Fukuda did
and construct vector fields that can be integrated to give the cone structure.

\begin{proof}[Proof of Theorem \ref{mainthm}]

\quad

\paragraph{\eqref{equivquitoinetsing} $\Rightarrow$ \eqref{equivimageofmap}}
First \eqref{equivquitoinetsing} implies that we have the quotient map
$Q:(\C^2,0)\to (X,0)$ and since the group is finite, this is a finite
map. If $i:(X,0)\to (\C^n,0)$ for some $n$ is an embedding, then
$F=i\circ Q$ is the germ of a finite holomorphic map $(\C^2,0)\to
(\C^n,0)$ with $(X,0)$ as its image. 

\paragraph{\eqref{equivimageofmap} $\Rightarrow$  \eqref{equivtrivialfundgroup}}
Assume that $(X,0)\subset (\C^n,0)$ is the image of the germ of a
finite map $F:(\C^2,0) \to (\C^n,0)$.

Let $L$ be the link of $(X,0)$ chosen as in Lemma
\ref{linktolink}. Hence we have a 
surjective map $\morf{f}{S^3}{L}$ gotten by restricting a
representative of $F$ to $\widetilde{S}$ and identifying
$\widetilde{S}$ with $S^3$. Since $(X,0)$ is normal, then $L$ is an
oriented graph manifold. Hence we start our
analysis just considering maps from $S^3$ to oriented graph manifolds with a negative definite plumbing.

%$L$ is irreducible, since all graph manifold are prime
%and the only prime manifold not irreducible is $S^2\times S^1$, which does not
%have a negative definite plumbing. Remember a 3-manifold $M$ is
%\emph{prime} if $M=M_1\conect M_2$ implies that $M_1$ or $M_2$ is
%homeomorphic to $S^3$, which is equivalent to that every separating embedded $2$
%sphere bounds a ball. A 3-manifold is \emph{irreducible} if every
%embedded $2$ sphere bounds a ball. That $S^2\times S^1$ is the only
%oriented prime $3$ manifolds that is not irreducible is a well know
%fact see for example 3.13 Lemma in \cite{hempel}.    

Let us consider two cases. The first is the case where $\pi_1(L)$ is
infinite. Let $\widetilde{L}$ be the universal cover of $L$, then
$\pi_i(\widetilde{L}) =\pi_i(L)$ for $i> 1$ and
$\pi_1(\widetilde{L})=\{0\}$. By Corollary \ref{spheretheorem}
$\pi_2(L)=\{0\}$ since $L$ is irreducible. Then by the
Hurewicz Theorem it follows that $\pi_3(L)= \pi_3(\widetilde{L}) =
H_3(\widetilde{L})$, and $H_3(\widetilde{L})= \{0\}$ since
$\widetilde{L}$ is non compact because $\pi_1(L)$ is infinite. Notice
that continuing this by induction we can prove that $L$ is an
Eilenberg-MacLane space $K(\pi_1,1)$. 

Since $\pi_3(L)=\{0\}$ we have that $f$ is homotopic to a constant map
since it is a map from $S^3$ to $L$. Because the topological degree is
a homotopy invariant it follows that $\deg_t(f) =0 $.

Let $0<\epsilon' < \epsilon$ then $\morf{f'}{F^{-1}(S_{\epsilon'}^{2n-1})}{
S_{\epsilon'}^{3}}$ by the above arguments is also a map from a
sphere to a link of $(X,0)$ and hence $\deg_t(f')=0$. Let $A\subset \C^n$
be the annulus bound by $S_\epsilon^{2n-1}$ and $S_{\epsilon'}^{2n-1}$
and $M:=F^{-1}(A)\in\C^2$. Notice that $M$ is the region in $\C^2$
bound by $F^{-1}(S_\epsilon^{2n-1})$ and
$F^{-1}(S_{\epsilon'}^{2n-1})$, and therefore a compact manifold with
boundary whose interior is a complex manifold. (The boundary might
have corneres, but they can canonically be smooted with out changing
any topology and only modifying small neighbourhoods of the corners.) 

Consider $\morf{F':=F\vert_M}{M}{A\cap X}$. Notice that $A\cap X$
is a complex manifold with boundaries, since $X$ is normal and
hence has an isolated singularity, so we can choose our
$\epsilon$ such that $B_\epsilon\cap (X\setminus \{0\})$ is
a complex manifold. $F'$ is a smooth map, holomorphic
on the interior of $M$, and $\deg_t F' = \deg_t F'\vert_{\partial M} = \deg_t
f =\deg_t f'= 0$ by the above consideration and 13.2.1 Theorem of
\cite{DubrovinFomenkoNovikovModernGeometryII}. Since $F'$ is smooth,
then one can calculate its topological degree as the sums of the signs
of the Jacobian determinants at the preimages of a regular value in
the interior of $A$. But $F'$ is holomorphic in the interior and hence
the sign of the Jacobian determinants are always positive. So $\deg_t
F' = 0$ implies that the preimage of a regular value is empty. This is
absurd since $F'$ is surjective by construction, and the set
of critical values have measure $0$ in $A$ by Sard's Theorem.

So only the case where $\pi_1(L)$ is finite can happen, and we have
proved \eqref{equivtrivialfundgroup}.

\paragraph{ \eqref{equivtrivialfundgroup}  $\Rightarrow$ \eqref{equivquitoinetsing}}
By Prill's Theorem 3 \cite{PrillLocalClasificationsOfQuotients} it
follows that the image of $F$ is a quotient of $(\C^2,0)$ by a group
of biholomorphisms of $(\C^2,0)$ isomorphic to $\pi_1(L)$. Since $L$
is the link of a of a singularity it is a graph manifold, and by Proposition
\ref{injectionofincompressiblesurfaces} any incompressible tori would
give an $\Z^2$ summand of $\pi_1(L)$. So $L$ can only have a single
componenet in its JSJ-decomposition since $\pi_1(L)$ is finite, and
hence $L$ is Seifert fibred. It then follows form the
work of Seifert \cite{seifertTopologyofThreeDimensionalFibreSpaces}
and Hopf \cite{hopfOnTheClifford-KleinSpaceProblem} that $\pi_1(L)$ is
a a subgroup of $U(2)$ acting linearly.

We could instead refer to Brieskorn's classification of normal
surface singularities with finite fundamental group
\cite{BrieskornRationalSingularitiesOfComplexSurfaces} for this direction.
\end{proof}

As we saw in the proof the link of the image  $L$ is a quotient of
$S^3$ by a finite subgroup of $SO(4)$ acting linearly. This implies
that they are Seifert fibred. The topology of these manifolds and
therefore their fundamental groups are well know. They were first classified by
Hopf \cite{hopfOnTheClifford-KleinSpaceProblem}  and Seifert
\cite{seifertTopologyofThreeDimensionalFibreSpaces}, see also Scott
\cite{scott} and Orlik \cite{orlik} for presentation of this. In
\cite{BrieskornRationalSingularitiesOfComplexSurfaces} Brieskorn also
derives these results while classifying singularities with finite
fundamental group using more algebraic methods. 

The topological types comes in several infinite classes which we will
discuss individually in more detail in the Section \ref{topologyandaction}. One thing to
be aware of is that not all the 
actions of the subgroups of $SO(4)$ give actions on $\C^2$, so some
topologies presented in \cite{scott} and \cite{orlik} the are not
singularity links. We will of course explain which are singularity
links, but even in these case it might be possible that some action of
their fundamental group, different from the ones we will present later, does not
act on $\C^2$.

In all cases it will follow that the singularities are rational from
using Laufer's criteria \cite{lauferrational} on the minimal dual
resolution graphs we present in the section \ref{topologyandaction}. In
particular the answer to Pe\~nafort's original question is; only the
simple singularities $A_n$, $D_n$, $E_6$, $E_7$ and $E_8$ can be image
of a germ of a finite map from $(\C^2,0)$ to $(\C^3,0)$, since these
are the only hypersurface singularities that are rational.

The dual resolution graphs will also show using the work of Laufer
\cite{laufertauttwodsing} that all these singularities are taut, that
is each topology supports a unique algebraic structure. So we will in
Section \ref{topologyandaction} focus on the topological properties.

\section{The Non Normal Case}\label{nonnormal}

\begin{example}\label{cyclicquotientnromalization}
Consider the singularity $(X,0)\subset \C^3$ given by $xy^{n-q}+z^n=0$
with $\gcd(n,q)=1$. Then $F(s,t)=(s^n,t^q,st)$ is a parametrization of
$(X,0)$. But if $q\neq n-1$ then $(X,0)$ is not a simple singularity,
it is easy to see since its multiplicity is not $2$. This does not
contradict the conclusion of the last Section, since
$(X,0)$ is not
normal. In fact it does not have an isolated singularity, since the
$x$-axis is the singular set.
\end{example}

If $F:(\C^2,0)\to (X,0)$ is a finite holomorphic map, then
by the universal property of normalization, since $\C^2$
is normal, there exist a commutative diagram
\[
\begin{tikzcd}
(\C^2,0) \arrow[r, "{\widetilde{F}}"] \arrow[rd, "{F}"]   
 & (\widetilde{X},0)\, \arrow[d, "{G}"]
\\ 
& (X,0),
\end{tikzcd}
\]
where $\morf{G}{(\widetilde{X},0)}{(X,0)}$ is the normalization. 
%If
%$F$ is finitely determined, then it follows from
%the Mather-Gaffney Criterion for finite determinacy, see for example
%Theorem 4.5 of \cite{mondnunoballesterossingularitiesofmappings}, that
%$\tilde{F}$ 
%is also finitely determined. 
%If $F^{^-1}(S^{2n-1})$ is the link of $0$ in $\C^2$, then we also have the
%  property that $\tilde{F}$ induces a map between $S^3$ and the link
%  of $(X,0)$. 
Because $F$ is finite it is clear that $\widetilde{F}$ is also finite
and since $F^{-1}(0) =\{0\}$ the same is true for $\widetilde{F}$. We
can then use Section
\ref{topologyandaction} to conclude that $(\widetilde{X},0)$ is one of
the singularities described there. 

If $(X,0)\subset (\C^3,0)$ then $(\widetilde{X},0)$  wont necessarily
be a hypersurface, and in Example
\ref{cyclicquotientnromalization} the normalization is a 
cyclic quotient singularity with link the lens space $L(n,q)$, which is
not a hypersurface unless $q=n-1$.

If $(X,0)\subset (\C^n,0)$ is locally irreducible then the normalization map
$\morf{G}{(\widetilde{X},0)}{(X,0)}$ is a homeomorphism see Section
12.2.6 of \cite{nemethinormalsurfacesing}. In this
case the topologies described in Section \ref{topologyandaction} are
also the possible topologies of $(X,0)$. But be aware that links might
not be smoothly embedded in $\C^n$. In particular, if $n=3$ then
$(X,0)$ not being normal is the same as having non isolated
singularities. So normalizations are only needed in this case if the
singularities are non isolated as in Example \ref{cyclicquotientnromalization}.

If $(X,0)$ is not locally irreducible, then we can have singularities
like the Whitney umbrella $x^2-y^2z=0$ which is the image of
$(st,s,t^2)$. Notice that the map is generically one to one, 
so the map restricted to $S^3$ is also generically
one to one and hence the normalization is smooth. But the link is not
$S^3$ since it has double points.

To conclude. If one have a singularity $(X,0)\subset (\C^n,0)$ and want
to see if it is the image of a finite map, then the first
step is to normalize $(X,0)$. The second step is to identify if
$(\widetilde{X},0)$ is a quotient singularity, which can be done by
looking at the topology of the link. To construct a map with $(X,0)$
as the image, one needs to construct a map to $(\widetilde{X},0)$, which
can be done using the quotient construction as seen in
Section \ref{topologyandaction}, and then
compose with the normalization map. Which is exactly the procedure of
Algorithm \ref{nonnomralalgo}

\section{Topology, Action and Maps}\label{topologyandaction}

In this section we will do two things, first give a complete
description of the topologies that can appear as the image of a germ
of a finite map. This will be done by describing the Seifert
invariants of the link, givening the plumbing diagram of the link/dual
resolution graph of the singularity, and finally describing the
fundamental group of the link. Notice as mentioned in the end of the
Section \ref{quotientsing}, this topological description also completely determines
the algebraic structure of the images, since these singularities are
taut.

The second thing we do in this section is to give the procedure for
how given one of the topologies one can find equations that give a
singularity with the given topology and a map from $\C^2$ to $\C^n$
that have this singularity as its image. We will completely do the
cases of the simple singularities, which are the cases $n=3$ and do
some examples with higher embedding dimension.

It follows form the classification of Siefert fibred manifolds that
we can divide the ones with finite fundamental group in two different
classes. The first class is the Lens spaces $L(p,q)$ that have
fundamental group a cyclic group $\Z/p\Z$, and corresponds to to the
cyclic quotient singularities. These manifolds do not have unique
Seifert fibred structure. 

The the manifolds in the other class have a unique Seifert
fibred structure and is fibred over the sphere with at least 3 singular
fibres.  Form Table 4.1 on page 441
of \cite{scott} we have that these topologies are determined by the
following inequalities concerning 
the \emph{orbifold Euler characteristic} $\chi(L)$ and the
\emph{rational Euler number} $e(L)$:
\begin{align*}
&\chi(L) > 0& &\text{and}& &e(L)\neq 0.&
\end{align*}
We will for convenience follow the conventions of \cite{Neu07}
for the normalized 
Seifert invariants $\{0,(1,-b)(p_1,q_1),\dots,(p_k,q_k)\}$ of $L$. Then
the rational euler number and the orbifold euler characteristic are
given by the formulas:
\begin{align*}
&\chi(L) = 2-k+\sum_{i=1}^k\frac{1}{p_i}& &\text{and}& &e(L)=
-b + \sum_{i=1}^k \frac{q_i}{p_i}.&
\end{align*} From the first inequality it follows that there are
only $3$ singular fibres and the Seifert
invariants have to be on the form $\{(1,-b)(2,1)(2,1)(p,q)\}$,
$\{(1,-b)(2,1)(3,q_1)(3,q_2)\}$, $\{(1,-b)(2,1)(3,q_1)(4,q_2)\}$ or
$\{(1,-b)(2,1)(3,q_1)(5,q_2)\}$. Since $L$ is the link of a singularity the
plumbing has to be negative definite, but that is the same as
$e(L)<0$ which can bee seen from the description below. So using the
second equation one gets that $b> 1$.

As mentioned earlier graph manifolds also have a description as
plumbed 3-manifolds, which we will briefly describe. Let $E_1$ and
$E_2$ be two disc bundles over compact surfaces $\Sigma_1$ and
$\Sigma_2$ respectively and let $e_i$ be the euler number of the
bundle $E_i$. If $\Sigma_i$ is orientable then $E_i$ can be seen as
the unit disc bundle in a complex line $L_i$ bundle and $e_i=c_1(L_i)$
is the first Chern class of this bundle. Let $\widetilde{D}_i\subset
\Sigma_i$ be a disc such that $E_i$ trivializes over it. We
have subsets $D_1\times\widetilde{D}_1 \subset E_1$ and
$D_2\times\widetilde{D}_2 \subset E_2$, and by identifying all discs
with the standard dice we can define a gluing of $E_1$ and $E_2$ by
$\morf{\phi}{D_1\times\widetilde{D}_1}{D_2\times\widetilde{D}_2}$
where $\phi(x,y)=(y,x)$. That is we identify the fibres in one
fibration with the disc in the base of the other fibration.

If one have a finite collection of disc bundles $E_1$,\dots,$E_k$, then
one can continue to do this process provided that we never glue along
subsets that have already been used for the gluing once before. The
result is a \emph{plumbed four manifold}, and we can store the data of
the construction in a \emph{plumbing graph} which have a vertex $v_i$ for
each $E_i$ an edge between $v_i$ and $v_j$ if the corresponding disc bundles are glued. We
decorate the vertices of the graph with the euler number of the
corresponding disc bundle and genus of the base surface of the
fibration (usually we do not write the genus if it is $0$). 
Then the plumbing graph give a way to construct the plumbed
four manifold which is unique up to homeomorphism. 

\emph{A plumbed 3-manifold} is the boundary of a plumbed four
manifold after smoothing the corners. A plumbing graph of a plumbed
four manifold is also a plumbing graph of its 3-manifold
boundary. But in this case it is not unique, there exist a series of
plumbing moves that can be used to change the plumbed four manifold
without changing its boundary. But Neumann proves in \cite{plumbing}
that there exist a unique normal form plumbing diagram. Which in our
case can be described as any vertex $v_i$ with only one or two neighbors
and a genus weight of $0$ have euler number $e_i\leq -2$.

Siefert fibred manifolds are plumbed manifolds, their plumbing graphs are star
shaped with a central vertex that can have a base with genus different
from $0$ and with one leg (or bamboo) for each singular fibre, along
which all the base surfaces are spheres. If the
Seifert invariants are  $\{g,(1,-b)(p_1,q_1),\dots,(p_k,q_k)\}$ then
the plumbing graph is:
\begin{center}
\begin{tikzpicture}
\draw[thin](0,0)--(2,2);
\draw[thin](0,0)--(2,1);
\draw[thin](0,0)--(2,-2);

\draw[thin](2,2)--(4,2);
\draw[thin](2,1)--(4,1);
\draw[thin](2,-2)--(4,-2);

\draw[thin](4,2)--(5,2);
\draw[thin](4,1)--(5,1);
\draw[thin](4,-2)--(5,-2);

\draw[dashed](5,2)--(7,2);
\draw[dashed](5,1)--(7,1);
\draw[dashed](5,-2)--(7,-2);

\draw[thin](7,2)--(8,2);
\draw[thin](7,1)--(8,1);
\draw[thin](7,-2)--(8,-2);

\draw[fill=white](0,0)circle(2pt);
\draw[fill=white](2,2)circle(2pt);
\draw[fill=white](2,1)circle(2pt);
\draw[fill=white](2,-2)circle(2pt);

\draw[fill=white](4,2)circle(2pt);
\draw[fill=white](4,1)circle(2pt);
\draw[fill=white](4,-2)circle(2pt);

\draw[fill=white](8,2)circle(2pt);
\draw[fill=white](8,1)circle(2pt);
\draw[fill=white](8,-2)circle(2pt);

\node(a)at(0,0.45){$-b$};
\node(a)at(2,2.35){$e_{11}$};
\node(a)at(4,2.35){$e_{12}$};
\node(a)at(8,2.35){$e_{1m_1}$};
\node(a)at(2,1.35){$e_{21}$};
\node(a)at(4,1.35){$e_{22}$};
\node(a)at(8,1.35){$e_{2m_2}$};
\node(a)at(2,-1.65){$e_{k1}$};
\node(a)at(4,-1.65){$e_{k2}$};
\node(a)at(8,-1.65){$e_{km_1}$};

\node(a)at(2,-.2){$\vdots$};
\node(a)at(4,-.2){$\vdots$};
\node(a)at(8,-.2){$\vdots$};

\node(a)at(0,-.5){$[g]$};

\end{tikzpicture}
\end{center}Where 
\begin{align*}
p_i/q_i =-e_{i1}-\cfrac{1}{-e_{i2}-\cfrac{1}{\ddots -\cfrac{1}{-e_{im_i}}}}.
\end{align*}Notice that since $e_{ij}\leq -2$ they are uniquely
determined by $p_1/q_i$. For this construction see Section 5 of
\cite{plumbing} but be aware that Neumann there uses a different
convention for the Seifert invariants, so the formulas are a little
different in the one we use.

Below we list the different minimal plumbing graphs for the given Seifert invariants, which in the case of lens spaces is just the numbers $(p,q)$. Afterwards we run Laufer's algorithm \cite{lauferrational} to calculate the minimal cycle $Z_{min}$. The algorithm will in all cases show that $(X,0)$ is a rational singularity and hence $Z_{min}=Z_{max}$ the maximal ideal cycle. Since the multiplicity of $(X,0)$ is equal to $-Z_{max}^2$ we calculate its value from the Seifert invariants. This is later useful, since $(X,0)$ being rational implies that Abyankar's inequality is a equality, and therefore that the minimal embedding dimension is equal to $\mul(X,0) + 1$. We also describe which group acts in each case.

To construct actual maps with a given singularity $(X,0)$ as its
image, we will construct $(X,0)$ as the the quotients of a given
action on $(\C^2,0)$ obtaining the quotient maps in the
process. To construct a geometric quotient of an action by and
algebraic group $G$ on
$(\C^2,0)$ one can do it in the following way using the geometric
invariant theory of Mumford \cite{mumfordgit}: The local algebra of
the quotient is $\C[u,v,]^G$ the ring of invariants of the
action. To find this ring we first find a generating set of invariant
polynomials $p_1$, $p_2,$\dots, $p_k$. Here we can use that we first
found the embedding dimension, so once we have as many
algebraically independent polynomials as the embedding dimension we can
stop looking for more. Then $\C[u,v]^G\cong \C[x_1,\dots,x_k]/I$ where
$I$ is the ideal generated by all the relations arising form setting
$x_i=p_i(u,v)$. Here we unfortunately do not have an upper bound of the
number of algebraically independent relations except it is higher than
$k-2$ unless the quotient is a simple singularity. But $I$ is the
kernal of a map and hence using computer algebra we can find a minimal
set of generators as we will see later. The map
$F:(\C^2,0)\to (\C^k,0)$ with image the given singularity is then
$F(u,v)=\bigl(p_1(u,v),p_2(u,v),\dots, p_k(u,v)\bigr)$.

\subsection{Cyclic Quotients}
The first family of links is the lens spaces $L(p,q)$ with
$\gcd(p,q)=1$. In this case $\pi_1\bigl(L(p,q)\bigr)= \Z/p\Z$ and it
acts on $S^3\subset \C^2$ by 
multiplication with the matrix $\begin{psmallmatrix} e^{2\pi i/p} & 0 \\ 0
  & e^{2\pi iq/p} \end{psmallmatrix}$. This action clearly extends over all
of $\C^2$ and the image of $F$ is the cyclic quotient singularity
given by this action. Notice here that $\Z/p\Z$ may have many
different actions giving possibly different quotients, but non of
these are singularity links unless they are diffeomorphic to one of the form above. We will call this action a  $(p,q)$ action and
remember that if $qq'\equiv 1 \mod p$ then the $(p,q)$ and $(p,q')$
actions give homeomorphic and hence isomorphic quotients since as we mentioned before the singularities are taut.

The plumbing diagrams in this case are 
\begin{center}
\begin{tikzpicture}
\draw[thin](0,0)--(1,0);
\draw[thin](1,0)--(2,0);
\draw[fill=white](0,0)circle(2pt);
\draw[fill=white](1,0)circle(2pt);

\draw[fill=white](4,0)circle(2pt);
\draw[dashed](2,0)--(3,0);
\draw[thin](3,0)--(4,0);

\draw[thin](4,0)--(5,0);

\draw[fill=white](4,0)circle(2pt);

\draw[fill=white](5,0)circle(2pt);

\node(a)at(0,0.35){$-a_1$};
\node(a)at(1,0.35){$-a_2$};
\node(a)at(4,0.35){$-a_{k-1}$}; 
\node(a)at(5,0.35){$-a_k$};

\end{tikzpicture}
\end{center}where $p/q=a_1-\cfrac{1}{a_2-\cfrac{1}{\dots
    -\frac{1}{a_k}}}$. 

Notice that if $p=1$ then $q=0$ and $p/q$ should
be interpreted as $1$, so then the plumbing diagram is just a single
vertex with weight $-1$ which is $S^3$. That is this is the case of
the image being a smooth point. We run Laufer's algorithm as explained before and get that $(X,0)$ is rational and $\mul(\im F) = -2(k-1) + \sum a_i$.

%From the plumbing diagram it easily follows using Laufer's criterion that the
%singularities are rational, moreover Laufer's Algorithm gives that
%the fundamental cycle $Z_{min}$ is reduced, that is all the coefficients
%to the components of the exceptional divisor are $1$, see
%\cite{lauferrational}. We can find the minimum $n$ for which $\C^n$ is
%the target of $F$ with image such a singularity. $n$ is just the embedding
%dimension of the singularity and since it is rational it is $\mul(\im
%F) +1$ and $\mul(\im F) = -Z_{min}^2$. Using that $Z_{min}=\sum E_i$
%we get $Z_{min}^2 = 2(k-1) + \sum E_i^2$ and $\mul(\im F) = -2(k-1) +
%\sum a_i$. In particular $n=3$ if and
%only if $a_i= 2$ for all $i$. That is $\im F$ is the simple
%singularity $A_k$.

\subsubsection{Maps in the case of the Cyclic  groups}\label{mapsforcyclicgroups}

To construct maps we start by finding invariant polynomials. First
notice that $u^p$ and $v^p$ are invariant. It is clear that no lower
power of $u$ is invariant and since $\gcd(p,q)=1$ we also have that no
lower power of $v$ is invariant. Next we look at monomial of the form
$u^av^b$ with $a,b> 0$. Such a monomial is invariant if and only if
$a+qb\equiv 0 \mod p$ or equivalently $a\equiv -bq \mod p$. If we use
$q'$ instead of $q$ it is $b\equiv -aq' \mod p$ so we notice that we
get the same pairs. This will give us $p+1$ different invariants
monomials, and we can get a minimal generating set among them doing
the following procedure: 

We let $b$ run through the numbers from $0$
to $p$ and calculate $a$ the following way. Starting from $b=0$ we
simply let $a= p- bq$, this works until $p- (k_1+1)q<0$ and we will have
gotten pairs $(a_0,0)$, \dots, $(a_{k_1},k_1)$ which we consider as vectors
in $\N^2$. We get $a_{k_1+1} = 2p-(k_1+1)q$, but now we need to be
careful. If the pair $(a_{k_1+1},k_1+1)$ is in the semigroup
generated by $(a_0,0)$, \dots, $(a_{k_1},k_1)$ in $\N^2$, then the
monomial $u^{a_{k_1+1}}v^{k_1+1}$ is a product of some of the
monomials $u^{a_0}$, \dots, $u^{a_{k_1}}v^{k_1}$. If that is the case
we discard the pair $(a_{k_1+1},k_1+1)$, if not we add the pair to the
list. We then
continue with $a_{k_1+2} = 2p-(k_1+2)q$ taking care to check if we
need to add it or not. When $a_{k_2+1} = 2p-(k_2+1)q<0$ we move on to
$a_{k_2+1} = 3p-(k_2+1)q$. This process will continue until $a_p=
qp-pq = 0$ giving the last monomial $v^p$. Since we were careful to
discard any monomial that was a multiple of previous obtained
monomial, this will result in a minimal list of generating invariant
monomials.

\begin{example}\label{exp=5q=3}
Let $p=5$ and $q=2$. Then the first pair is as always $(5,0)$. The next
is $(3,1)$ and we also get $(1,2)$ before $5-3\cdot 2<0$. Following
the algorithm described above the next pair should be $(a_3,3)$ where
$a_3 = 2\cdot 5-3\cdot 2 = 4$, but notice that $(4,3)= (3,1) + (1,2)$
so this pair does not give a new generator. The same with $(2,4)$
since $(2,4) = 2(1,2)$. The last pair to add is of course $(0,5)$.
So a generating set of monomials is $u^5,u^3v,uv^2,v^5$.
\end{example}

Finding the relations is the same just consider all monomials given as
products of the generators. It is clear that we do not need any powers
of the monomials that are higher than $p$ and pure powers of $u$ and
$v$ will not appear, so it is a finite set. Then
all the relations will appear as identities of these
monomials. Choosing a minimal algebraically independent set will
define a minimal set of equations for the image.

\begin{example}
Continuing with Example \ref{exp=5q=3} we get that all the possible
products not involving powers of $x=u^5$ and $y=v^5$ we set $z=u^3v$ and
$w=uv^2$. It is clear that $z^5=x^3y$ and $w^5=xy^2$ so we will not
write any monomials that includes $z^5$ or $w^5$ except those two. We
then get:

\begin{align*}
&z^2=u^3v^2,& &z^3=u^{9}v^3,& &z^4=u^{12}v^{4},& &z^5=u^{15}v^{5},&\\
&zw=u^{4}v^{3},& &z^2w=u^{7}v^{4},&
&z^3w=u^{10}v^{5},& &z^4w=u^{13}v^{6},&\\ &zw^2=u^{5}v^{5},&
&z^2w^2=u^{8}v^{8},& &z^3w^2=u^{11}v^{7},&
&z^4w^2=u^{15}v^{8},&\\
&zw^3=u^{6}v^{7},& &z^2w^3=u^{9}v^{8},&
&z^3w^3=u^{12}v^{9},& &z^4w^3=u^{15}v^{10},&\\ &zw^4=u^{7}v^{9},&
&z^2w^4=u^{10}v^{10},& &z^3w^4=u^{13}v^{11},&
&z^4w^4=u^{17}v^{12},&\\ &w^2=u^{2}v^{4},&
&w^3=u^{3}v^{6},& &w^4=u^{4}v^{8},& &w^5=u^{5}v^{10}.&
\end{align*}
We then easily see that the relations are $z^4w^3=x^3y^2$ and
$z^2w^4=x^2y^2$ in addition to $z^5=x^3y$ and $w^5=xy^2$.

Hence the map $F:=(\C^2,0)\to (\C^4,0)$ given by
$F(u,v)=\bigl(u^5,u^3v,uv^2,v^5\bigr)$ will have image the cyclic
quotient singularity $(5,2)$ with equations $z^4w^3=x^3y^2$,
$z^2w^4=x^2y^2$, $z^5=x^3y$ and $w^5=xy^2$.
\end{example}
We finish this case of cyclic quotients by giving the maps for all
groups of order less than $8$ in the table below. We only list one of
the actions $(p,q)$ and $(p,q')$ if $qq'\equiv 1 \mod p$ since these
actions give isomorphic quotients.

\begin{center}
\begin{tabular}{lcc}\toprule
The (p,q) action & Embedding dimension & Map \\ \midrule
$(2,1)$ & $3$ & $(u^2,uv,v^2)$ \\
$(3,1)$ & $4$ & $(u^3,u^2v,uv^2,v^3)$ \\
$(3,2)$ & $3$ & $(u^3,uv,v^3)$ \\
$(4,1)$ & $5$ & $(u^4,u^3v,u^2v^2,uv^3,v^4)$ \\
$(4,3)$ & $3$ & $(u^4,uv,v^4)$ \\ 
$(5,1)$ & $6$ & $(u^5,u^4v,u^3v^2,u^2v^3,uv^4,v^5)$ \\
$(5,2)$ & $4$ & $(u^5,u^3v,uv^2,v^5)$ \\
$(5,4)$ & $3$ & $(u^5,uv,v^5)$ \\
$(6,1)$ & $7$ & $(u^6,u^5v,u^4v^2,u^3v^3,u^2v^4,uv^5,v^6)$ \\
$(6,5)$ & $3$ & $(u^6,uv,v^6)$ \\
$(7,1)$ & $8$ & $(u^7,u^6v,u^5v^2,u^4v^3,u^3v^4,u^2v^5,uv^6,v^7)$ \\
$(7,2)$ & $5$ & $(u^7,u^5v,u^3v^2,uv^3,v^7)$ \\
$(7,3)$ & $4$ & $(u^7,u^4v,uv^2,v^7)$ \\
$(7,6)$ & $3$ & $(u^7,uv,v^7)$ \\ \bottomrule 
\end{tabular}
\end{center}

\subsection{Binary Polyhedral Quotiones}

As we will see below, if $\pi_1(L)$ is not a cyclic group it is of the
form $\Z/m\Z\times G$ where $G$ is one of the binary polyhedral groups
(binary dihedral groups $D_{4n}^*$, binary
tetrahedral group $T^*$, binary octahedral group $O^*$, binary
icosahedral group $I^*$) or the groups $D'_{2^{l+2}(2l+1)}$ or
  $T'_{8\cdot3^k}$. As an aside the binary cyclic groups, that is
the double cover of $\Z/p\Z$, also acts on $\C^2$. But its action 
is just the action of multiplication by $\begin{psmallmatrix}
  e^{\pi i/p} & 0 \\ 0 & e^{\pi i/p} \end{psmallmatrix}$, which is one
of the actions by $\Z/2p\Z$ covered under the cyclic quotients.

We will first briefly give a summery of the topology, multiplicity of
the singularity and which group acts from the Seifert invariants in
the different families. Then afterwards first find maps when the fundamental group
is just a binary polyhedral group that acts, and later give examples
of products with the cyclic groups. We will briefly discuss the case
of the groups being $D'_{2^{l+2}(2l+1)}$ or $T'_{8\cdot3^k}$ last.

Notice that maps we construct restrict to the universal covering maps on
the link, but that is not the only possibility for maps in these
cases. $\pi_3(L)\cong \pi_3(S^3)\cong \Z$, and any
class in there can potentially be represented by the restriction of a finite
map from $(\C^2,0)$ to $(\C^n,0)$. But notice that the isomorphism
$\pi_3(L)\cong \pi_3(S^3)$ is from the fibration sequence and is hence
given by the induced map from the quotient map. So any class can
be represented as a composition of a map $S^3\to S^3$ composed with
the quotient map. This means that any map $S^3\to L$ is homotopic to a
map $S^3\to S^3$ composed with the quotient map. 

\subsubsection{Extensions of the  Dihedral Group}
We will consider the each of the cases with $3$ singular fibres
starting with the case of Seifert invariants
$\{(1,-b)(2,1)(2,1)(p,q)\}$. The plumbing diagram is then 
\begin{center}
\begin{tikzpicture}
\draw[thin](0,0)--(1,0);
\draw[thin](1,0)--(2,0);
\draw[thin](1,0)--(1,-1);
\draw[thin](2,0)--(3,0);
\draw[dashed](3,0)--(4,0);
\draw[thin](4,0)--(5,0);
\draw[thin](4,0)--(5,0);

\draw[fill=white](0,0)circle(2pt);
\draw[fill=white](1,0)circle(2pt);
\draw[fill=white](1,-1)circle(2pt);
\draw[fill=white](2,0)circle(2pt);
\draw[fill=white](5,0)circle(2pt);

\node(a)at(0,0.35){$-2$};
\node(a)at(0.60,-1){$-2$};
\node(a)at(1,0.35){$-b$};
\node(a)at(2,0.35){$-a_{1}$}; 
\node(a)at(5,0.35){$-a_k$};

\end{tikzpicture}
\end{center}where $p/q=a_1-\cfrac{1}{a_2-\cfrac{1}{\dots
    -\frac{1}{a_k}}}$. 

Using Laufer's algorithm
we get that if $b> 2$ then the fundamental cycle is reduced and
$\mul(\im F) = -2k +b +\sum a_i $. If $b=2$ then define $l$ such that 
$a_1=a_2=\dots=a_l=a_{l}=2$. Laufer's algorithm then gives $\mul(\im
F) = -2(k-1) +2l + \sum_{i=l+1}^k a_i $ if $l>0$ and $\mul(\im F) =
-2(k-1) + \sum_{i=1}^k a_i $ if $l=0$. In particular we get a
hypersurface only if $b=a_1=\dots=a_k=2$, that is we have a
$D_{k+3}$ singularity.

In this case the fundamental groups of the link depends on the number
$m:= (b-1)p-q$ following Theorem 2 section 6.2 in
\cite{orlik}, the difference between the value of $m$ defined here and
in the reference is that \cite{orlik} uses the other convention
for defining the Seifert invariants. Then there are two different
cases, first if $\gcd(m,2p)=1$ then $\pi_1(L) = \Z/m\Z \times D^*_{4p}$ where
$D^*_{4p}$ is the binary dihedral group. That is $D^*_{4p}$ is the
double cover of the dihedral group $D_{2p}$ under the cover
induced by the cover $S^3\to SO(3)$. %, it has the presentation $\langle x,y \vert x^2=xyxy=y^p \rangle$. 
If $m=2m'$ then $m'$ is even and
$\gcd(m',p)=1$. If $m'=2^km''$ with $m''$ odd, then $\pi_1(\im F) =
\Z/m''\Z \times D'_{2^{k+2}p}$ where $D'_{2^{k+2}p}$ is a subgroup
of $SO(4)$ isomorphic to $\langle x,y \vert x^{2^k}=1,\ y^{p}=1,\
xy^{-1}=y^{-1}x \rangle$. Notice the $D'_{4p}=D^*_{4p}$. 

\subsubsection{Extensions of the Tetrahedral Group}
Next is the case of the Seifert invariants being
$\{(1,-b)(2,1)(3,q_1)(3,q_2)\}$. The plumbing diagram is one of the following graphs
depending on $q_1$ and $q_2$. 
\begin{center}
\begin{tikzpicture}
\draw[thin](0,0)--(1,0);
\draw[thin](1,0)--(2,0);
\draw[thin](1,0)--(1,-1);

\draw[fill=white](0,0)circle(2pt);
\draw[fill=white](1,0)circle(2pt);
\draw[fill=white](1,-1)circle(2pt);
\draw[fill=white](2,0)circle(2pt);

\node(a)at(0,0.35){$-3$};
\node(a)at(0.60,-1){$-2$};
\node(a)at(1,0.35){$-b$};
\node(a)at(2,0.35){$-3$};

\draw[thin](3,0)--(4,0);
\draw[thin](4,0)--(5,0);
\draw[thin](4,0)--(4,-1);
\draw[thin](5,0)--(6,0);

\draw[fill=white](3,0)circle(2pt);
\draw[fill=white](4,0)circle(2pt);
\draw[fill=white](4,-1)circle(2pt);
\draw[fill=white](5,0)circle(2pt);
\draw[fill=white](6,0)circle(2pt);

\node(a)at(3,0.35){$-3$};
\node(a)at(3.60,-1){$-2$};
\node(a)at(4,0.35){$-b$};
\node(a)at(5,0.35){$-2$}; 
\node(a)at(6,0.35){$-2$};

\draw[thin](7,0)--(8,0);
\draw[thin](8,0)--(9,0);
\draw[thin](9,0)--(10,0);
\draw[thin](9,0)--(9,-1);
\draw[thin](10,0)--(11,0);

\draw[fill=white](7,0)circle(2pt);
\draw[fill=white](8,0)circle(2pt);
\draw[fill=white](9,-1)circle(2pt);
\draw[fill=white](9,0)circle(2pt);
\draw[fill=white](10,0)circle(2pt);
\draw[fill=white](11,0)circle(2pt);

\node(a)at(7,0.35){$-2$};
\node(a)at(8.60,-1){$-2$};
\node(a)at(9,0.35){$-b$};
\node(a)at(8,0.35){$-2$}; 
\node(a)at(10,0.35){$-2$}; 
\node(a)at(11,0.35){$-2$};

\node(a)at(1,-1.5){$q_1=q_2=1$};
\node(a)at(4.5,-1.5){$q_1+q_2=3$ };
\node(a)at(9,-1.5){$q_1=q_2=2$};
\end{tikzpicture}
\end{center} %If $b=2$ in the last graph then it is the $E_6$
%singularity, which is rational. Using Laufer's algorithm to calculated
%$z_{min}$ will show that all the other case are also rational
%singularities and hence we can calculate the multiplicity to be:
\begin{center}
\begin{tabular}{lcccc}\toprule
  & $q_1= q_2=1$ & $q_1+q_2=3$ &  $q_1= q_2=2$ \\ \midrule
$\mul(\im F)$ & $2+b$ & $1+b$ & $b$ \\ \bottomrule 
\end{tabular}
\end{center}Once again we see that only the simple singularity $E_6$ ($b=2$ in the last graph) is a
hypersurface singularity.

As in the last case the fundamental group $\pi_1(L)$ depends on a
number defined as $m=
6b-3 -2q_1-2q_2$ according to Theorem 2 section 6.2 in \cite{orlik},
again the difference in formula is because of the different conventions
for the Seifert invariants. If $\gcd(m,12)=1$ then
$\pi_1(L)=\Z/m\Z\times T^*$ where $T^*$ is the binary tetrahedral
group. If $m=3^km'$ and $\gcd(m',12)=1$ then $\pi_1(L)=\Z/m'\Z\times
T'_{8\cdot 3^k}$ where $T'_{8\cdot 3^k}$ is a subgroup of $SO(4)$
isomorphic to $\langle x,y,z \vert x^2=xyxy=y^2,\ zxz^{-1}=y,\
zyz^{-1}=xy, z^{3^k}=1 \rangle$.

\subsubsection{Extensions of the  Octahedral Group}
The third case is when the Seifert invariants are
$\{(1,-b)(2,1)(3,q_1)(4,q_2)\}$ and the plumbing diagram is one of the following
depending on $q_1$ and $q_2$.
\begin{center}
\begin{tikzpicture}
\draw[thin](0,0)--(1,0);
\draw[thin](1,0)--(2,0);
\draw[thin](1,0)--(1,-1);

\draw[fill=white](0,0)circle(2pt);
\draw[fill=white](1,0)circle(2pt);
\draw[fill=white](1,-1)circle(2pt);
\draw[fill=white](2,0)circle(2pt);

\node(a)at(0,0.35){$-3$};
\node(a)at(0.60,-1){$-2$};
\node(a)at(1,0.35){$-b$};
\node(a)at(2,0.35){$-4$};

\draw[thin](4,0)--(5,0);
\draw[thin](5,0)--(6,0);
\draw[thin](5,0)--(5,-1);
\draw[thin](6,0)--(7,0);
\draw[thin](7,0)--(8,0);

\draw[fill=white](4,0)circle(2pt);
\draw[fill=white](5,0)circle(2pt);
\draw[fill=white](5,-1)circle(2pt);
\draw[fill=white](6,0)circle(2pt);
\draw[fill=white](7,0)circle(2pt);
\draw[fill=white](8,0)circle(2pt);

\node(a)at(4,0.35){$-3$};
\node(a)at(4.60,-1){$-2$};
\node(a)at(5,0.35){$-b$};
\node(a)at(6,0.35){$-2$}; 
\node(a)at(7,0.35){$-2$};
\node(a)at(8,0.35){$-2$};

\node(a)at(1,-1.5){$q_1=q_2=1$};
\node(a)at(6,-1.5){$q_1=1$ and $q_2=3$ };

\draw[thin](0,-3)--(1,-3);
\draw[thin](1,-3)--(2,-3);
\draw[thin](2,-3)--(3,-3);
\draw[thin](2,-3)--(2,-4);

\draw[fill=white](0,-3)circle(2pt);
\draw[fill=white](1,-3)circle(2pt);
\draw[fill=white](2,-3)circle(2pt);
\draw[fill=white](2,-4)circle(2pt);
\draw[fill=white](3,-3)circle(2pt);

\node(a)at(0,-2.65){$-2$};
\node(a)at(1.60,-4){$-2$};
\node(a)at(1,-2.65){$-2$};
\node(a)at(2,-2.65){$-b$}; 
\node(a)at(3,-2.65){$-4$};

\draw[thin](4,-3)--(5,-3);
\draw[thin](5,-3)--(6,-3);
\draw[thin](6,-3)--(7,-3);
\draw[thin](6,-3)--(6,-4);
\draw[thin](7,-3)--(8,-3);
\draw[thin](8,-3)--(9,-3);

\draw[fill=white](4,-3)circle(2pt);
\draw[fill=white](5,-3)circle(2pt);
\draw[fill=white](6,-3)circle(2pt);
\draw[fill=white](6,-4)circle(2pt);
\draw[fill=white](7,-3)circle(2pt);
\draw[fill=white](8,-3)circle(2pt);
\draw[fill=white](9,-3)circle(2pt);

\node(a)at(4,-2.65){$-2$};
\node(a)at(5.60,-4){$-2$};
\node(a)at(6,-2.65){$-b$};
\node(a)at(5,-2.65){$-2$}; 
\node(a)at(7,-2.65){$-2$}; 
\node(a)at(8,-2.65){$-2$};
\node(a)at(9,-2.65){$-2$};

\node(a)at(1.5,-4.5){$q_1= 2$ and $q_2=1$};
\node(a)at(6,-4.5){$q_1=2$ and $q_2=3$ };

\end{tikzpicture}
\end{center}%If we have $b=2$ in the last graph, then $\im F$ is the
%simple singularity $E_7$, which is rational. Using Laufer's algorithm
%to calculate $z_{min}$ we see that all this singularities are
%rational, and get their multiplicities to be: 
\begin{center}
\begin{tabular}{lcccc}\toprule
  & $q_1= q_2=1$ & $q_1= 1$ and $q_2=3$ & $q_1= 2$ and
 $q_2=1$ & $q_1= 2$ and $q_2=1$ \\ \midrule
$\mul(\im F)$ & $3+b$ & $1+b$ & $2+b$ & $b$ \\ \bottomrule 
\end{tabular}
\end{center}So again we see that only the simple singularity $E_7$ ($b=2$ in the last graph) is
a hypersurface.

The fundamental group $\pi_1(\im F) = \Z/{m\Z} \times O^*$ where
$O^*$ is the binary octahedral group and
$m = 12b-6-4q_1-3q_2$ according to Theorem 2 section 6.2 of
\cite{orlik}. Again be aware that he uses the other convention for the
Seifert invariants, hence the different formula for $m$. 

\subsubsection{Extensions of the Icosahedral Group}
The last case is for the Seifert invariants to be
$\{(1,-b)(2,1)(3,q_1)(5,q_2)\}$. Then depending on $q_1$ and $q_2$ we have
the plumbing graphs: 
\begin{center}
\begin{tikzpicture}
\draw[thin](0,0)--(1,0);
\draw[thin](1,0)--(2,0);
\draw[thin](1,0)--(1,-1);

\draw[fill=white](0,0)circle(2pt);
\draw[fill=white](1,0)circle(2pt);
\draw[fill=white](1,-1)circle(2pt);
\draw[fill=white](2,0)circle(2pt);

\node(a)at(0,0.35){$-3$};
\node(a)at(0.60,-1){$-2$};
\node(a)at(1,0.35){$-b$};
\node(a)at(2,0.35){$-5$};

\draw[thin](6,0)--(7,0);
\draw[thin](8,0)--(8,-1);
\draw[thin](7,0)--(8,0);
\draw[thin](8,0)--(9,0);

\draw[fill=white](6,0)circle(2pt);
\draw[fill=white](8,-1)circle(2pt);
\draw[fill=white](7,0)circle(2pt);
\draw[fill=white](8,0)circle(2pt);
\draw[fill=white](9,0)circle(2pt);

\node(a)at(7.60,-1){$-2$};
\node(a)at(6,0.35){$-2$};
\node(a)at(7,0.35){$-2$}; 
\node(a)at(8,0.35){$-b$};
\node(a)at(9,0.35){$-5$};

\node(a)at(1,-1.5){$q_1=q_2=1$};
\node(a)at(8,-1.5){$q_1=2$ and $q_2=1$ };

\draw[thin](0,-3)--(1,-3);
\draw[thin](1,-3)--(2,-3);
\draw[thin](2,-3)--(3,-3);
\draw[thin](1,-3)--(1,-4);

\draw[fill=white](0,-3)circle(2pt);
\draw[fill=white](1,-3)circle(2pt);
\draw[fill=white](2,-3)circle(2pt);
\draw[fill=white](1,-4)circle(2pt);
\draw[fill=white](3,-3)circle(2pt);

\node(a)at(0,-2.65){$-3$};
\node(a)at(0.60,-4){$-2$};
\node(a)at(1,-2.65){$-b$};
\node(a)at(2,-2.65){$-3$}; 
\node(a)at(3,-2.65){$-2$};

\draw[thin](6,-3)--(7,-3);
\draw[thin](7,-3)--(8,-3);
\draw[thin](8,-3)--(8,-4);
\draw[thin](8,-3)--(9,-3);
\draw[thin](9,-3)--(10,-3);

\draw[fill=white](6,-3)circle(2pt);
\draw[fill=white](7,-3)circle(2pt);
\draw[fill=white](8,-4)circle(2pt);
\draw[fill=white](8,-3)circle(2pt);
\draw[fill=white](9,-3)circle(2pt);
\draw[fill=white](10,-3)circle(2pt);

\node(a)at(7.60,-4){$-2$};
\node(a)at(7,-2.65){$-2$};
\node(a)at(6,-2.65){$-2$}; 
\node(a)at(8,-2.65){$-b$}; 
\node(a)at(9,-2.65){$-3$};
\node(a)at(10,-2.65){$-2$};

\node(a)at(1.5,-4.5){$q_1= 1$ and $q_2=2$};
\node(a)at(8,-4.5){$q_1= q_2=2$ };

\end{tikzpicture}
\end{center}

\begin{center}
\begin{tikzpicture}

\draw[thin](0,-6)--(1,-6);
\draw[thin](1,-6)--(2,-6);
\draw[thin](2,-6)--(3,-6);
\draw[thin](1,-6)--(1,-7);

\draw[fill=white](0,-6)circle(2pt);
\draw[fill=white](1,-6)circle(2pt);
\draw[fill=white](2,-6)circle(2pt);
\draw[fill=white](1,-7)circle(2pt);
\draw[fill=white](3,-6)circle(2pt);

\node(a)at(0,-5.65){$-3$};
\node(a)at(0.60,-7){$-2$};
\node(a)at(1,-5.65){$-b$};
\node(a)at(2,-5.65){$-2$}; 
\node(a)at(3,-5.65){$-3$};

\draw[thin](6,-6)--(7,-6);
\draw[thin](7,-6)--(8,-6);
\draw[thin](8,-6)--(8,-7);
\draw[thin](8,-6)--(9,-6);
\draw[thin](9,-6)--(10,-6);

\draw[fill=white](6,-6)circle(2pt);
\draw[fill=white](7,-6)circle(2pt);
\draw[fill=white](8,-7)circle(2pt);
\draw[fill=white](8,-6)circle(2pt);
\draw[fill=white](9,-6)circle(2pt);
\draw[fill=white](10,-6)circle(2pt);

\node(a)at(7.60,-7){$-2$};
\node(a)at(7,-5.65){$-2$};
\node(a)at(6,-5.65){$-2$}; 
\node(a)at(8,-5.65){$-b$}; 
\node(a)at(9,-5.65){$-2$};
\node(a)at(10,-5.65){$-3$};

\node(a)at(1.5,-7.5){$q_1= 1$ and $q_2=3$};
\node(a)at(8,-7.5){$q_1=2$ and $q_2=3$ };

\draw[thin](0,-9)--(1,-9);
\draw[thin](1,-9)--(2,-9);
\draw[thin](2,-9)--(3,-9);
\draw[thin](1,-9)--(1,-10);
\draw[thin](3,-9)--(4,-9);
\draw[thin](4,-9)--(5,-9);

\draw[fill=white](0,-9)circle(2pt);
\draw[fill=white](1,-9)circle(2pt);
\draw[fill=white](2,-9)circle(2pt);
\draw[fill=white](1,-10)circle(2pt);
\draw[fill=white](3,-9)circle(2pt);
\draw[fill=white](4,-9)circle(2pt);
\draw[fill=white](5,-9)circle(2pt);

\node(a)at(0,-8.65){$-3$};
\node(a)at(0.60,-10){$-2$};
\node(a)at(1,-8.65){$-b$};
\node(a)at(2,-8.65){$-2$}; 
\node(a)at(3,-8.65){$-2$}; 
\node(a)at(4,-8.65){$-2$}; 
\node(a)at(5,-8.65){$-2$};

\draw[thin](6,-9)--(7,-9);
\draw[thin](7,-9)--(8,-9);
\draw[thin](8,-9)--(8,-10);
\draw[thin](8,-9)--(9,-9);
\draw[thin](9,-9)--(10,-9);
\draw[thin](10,-9)--(11,-9);
\draw[thin](11,-9)--(12,-9);

\draw[fill=white](6,-9)circle(2pt);
\draw[fill=white](7,-9)circle(2pt);
\draw[fill=white](8,-10)circle(2pt);
\draw[fill=white](8,-9)circle(2pt);
\draw[fill=white](9,-9)circle(2pt);
\draw[fill=white](10,-9)circle(2pt);
\draw[fill=white](11,-9)circle(2pt);
\draw[fill=white](12,-9)circle(2pt);

\node(a)at(7.60,-10){$-2$};
\node(a)at(7,-8.65){$-2$};
\node(a)at(6,-8.65){$-2$}; 
\node(a)at(8,-8.65){$-b$}; 
\node(a)at(9,-8.65){$-2$};
\node(a)at(10,-8.65){$-2$};
\node(a)at(11,-8.65){$-2$};
\node(a)at(12,-8.65){$-2$};

\node(a)at(1.5,-10.5){$q_1= 1$ and $q_2=4$};
\node(a)at(8,-10.5){$q_1=2$ and $q_2=4$ };

\end{tikzpicture}
\end{center}
%If $b=2$ then the last graph becomes the simple singularity $E_8$,
%which is rational. By calculating $z_{min}$ from Laufer's algorithm
%one see that all de singularities are rational and get their
%multiplicities to be:
\begin{center}
\begin{tabular}{lc}\toprule
 & $\mul(\im F)$ \\ \midrule
$q_1= q_2=1$ & $4+b $ \\
$q_1= 2$ and $q_2= 1$ & $3+b$ \\
$q_1= 1$ and $q_2=2$ & $2+b$ \\
$q_1= q_2=2$ & $1+b$ \\
$q_1= 1$ and $q_2=3$ & $2+b$ \\
$q_1= 2$ and $q_2=3$ & $1+b$ \\
$q_1= 1$ and $q_2=4$ & $1+b$ \\ 
$q_1= 2$ and $q_2=4$ & $b$ \\ \bottomrule 
\end{tabular}
\end{center}
In this case it is also only the simple singularity $E_8$ ($b=2$ in the last graph) that is a hypersurface.

The fundamental group $\pi_1(\im F) = \Z/{m\Z} \times I^*$ where
$I^*$ is the binary icosahedral group and
$m= 30b-15-10q_1-6q_2$ according to Theorem 2 section 6.2 of
\cite{orlik}. Again be aware that he uses the other convention for the
Seifert invariants, hence the different formula for $m$. 

\subsection{Maps in the case of the Binary Polyhedral groups}

In this section we find concrete maps to the quotients of $(\C^2,0)$ by
the binary polyhedral groups acting. To do so we calculate quotients
by finding invariant polynomials as described in the beginning of this
section. The invariant polynomials of the binary polyhedral groups were
originally found by Felix Klein in 1884. Here with the help of
Guilhermo Pe\~nfort-Sanchis and Mathematica we have created a set of
of generators of the ring of invariant polynomials.

First some words about how to obtain them. Remember that the binary
groups are the preimages of the polyhedral groups under a double cover
$S^3\to SO(3)$. This is not a topological cover, but a lie group
cover. Now the lie groups structure on $S^3$ is usually considered as
the group spin $1$, but one can also consider it as the groups of unit
quaternions. The unit quaternions are isomorphic to $SU(2)$ where the
generators or the quaternions are represented by
$\begin{psmallmatrix} 1 & 0 \\ 0 & 1 \end{psmallmatrix}$,
$\begin{psmallmatrix} i & 0 \\ 0 & -i \end{psmallmatrix}$,
$\begin{psmallmatrix} 0 & 1 \\ -1 & 0 \end{psmallmatrix}$  
$\begin{psmallmatrix} 0 & i \\ i & 0 \end{psmallmatrix}$  
So the action of the binary polyhedral groups we will use is in this
embedding in $SU(2)$.

The binary dihedral group $D_{4n}^*$ can be generated by %$\cos
%\tfrac{\pi}{n}\begin{psmallmatrix} 1 & 0 \\ 0 & 1 \end{psmallmatrix}
%+ \sin \tfrac{\pi}{n} \begin{psmallmatrix} i & 0 \\ 0 &
%  -i \end{psmallmatrix} = $
$\begin{psmallmatrix} e^{\pi i/n} & 0 \\ 0 &
  e^{-\pi i/n} \end{psmallmatrix}$ and $\begin{psmallmatrix} 0 & 1 \\ -1 &
  0 \end{psmallmatrix}$. The first generator transforms $u$ to
$e^{\pi i/n}u$ and $v$ to $e^{-\pi i/n}v$ and the second transforms $u$
to $-v$ and $v$ to $u$. This means that the monomial $uv$ is invariant
under the action of the first generator, but transformed into $-uv$ by
the second. Hence $p_{D1}= (uv)^2$ is invariant under both generators and thus
under the whole group. Now $u^{2n}$ and $v^{2n}$ are invariant under
the first generator but interchanged by the second. So $p_{D2} = u^{2n}+v^{2n}$
is invariant under the whole group. This also means that
$u^{2n}-v^{2n}$ is invariant under the first generator but changes
sign under the second, so $p_{D3} = uv(u^{2n}-v^{2n})$ is invariant. 

We have
now found $3$ different invariant polynomials, and it is not hard to
see that they are algebraically independent. So we have a generating
set of the ring of invariants. If we let $x=(uv)^2$, $y=u^{2n}+v^{2n}$
and $z=uv(u^{2n}-v^{2n})$ then we get the relation $z^2 =
(uv)^2(u^{4n}+v^{4n}-2(uv)^{2n}) =x(y^2-4x^n)$. Since we know that
the quotient is a hypersurface singularity and this is an irreducible
polynomial, this is an equation for the image given by the map
$(u,v)\mapsto \bigl((uv)^2,u^{2n}+v^{2n},uv(u^{2n}-v^{2n})\bigr)$.

The cases of $T^*$, $O^*$ and $I^*$ is similar. The generators as a
subgroup of $SU(2)$ are 
\begin{align*}
&T^*&
&\tfrac{1}{2}\begin{pmatrix} 1+i &
  1+i \\ -1+i & 1+i \end{pmatrix},\ \tfrac{1}{2}\begin{pmatrix} 1+i &
  1-i \\ -1-i & 1-i \end{pmatrix} \\
&O^*&
&\tfrac{1}{2}\begin{pmatrix} 1+i &
  1+i \\ -1+i & 1+i \end{pmatrix},\
\tfrac{1}{\sqrt{2}}\begin{pmatrix} 0& 1+i \\ -1+i &
  0 \end{pmatrix} \\ 
&I^*&
&\tfrac{1}{2}\begin{pmatrix} 1+i &
  1+i \\ -1+i & 1+i \end{pmatrix},\ \tfrac{1}{2}\begin{pmatrix} 
  \tfrac{2}{1+\sqrt{5}} +i\tfrac{1+\sqrt{5}}{2} & 1\\ -1 &
  \tfrac{2}{1+\sqrt{5}} +i\tfrac{1+\sqrt{5}}{2} \end{pmatrix}
\end{align*}It is not difficult to see that these matrices
satisfies the relations of the generators of their respective groups. To
find the invariant polynomials we do the same process as in the binary
dihedral case. Testing polynomials until we have three algebraically
independent invariant polynomials. In this case the calculations
becomes increasingly harder and we used the computer algebra system
Matematica to aid us. In the later case even if we made assumption
about the polynomials being symmetric or antisymmetric Matematica had
problems solving them, and we had to simplify the expression by using 
partial solutions to get the program to give a solution.

This gave us the following list of invariant polynomials:
\begin{align*}
&T^*&
p_{T1}=\,&uv^5-u^5v\\ & & p_{T2}=\,&u^8+v^8+14u^4v^4\\ & &
p_{T3}=\,&u^{12}+v^{12}-33u^8v^4-33u^4v^8 \\
&O^*&
p_{O1}=\,&u^{10}v^2+u^2v^{10}-2u^6v^6\\ & & p_{O2}=\,&u^8 +v^8 +14u^4v^4\\ 
& & P_{O3}=\,&34u^5v^{13}-34u^{13}v^5+u^{17}v-uv^{17} \\ 
&I^*&
p_{I1}=\,&\sqrt{5}u^{12}+\sqrt{5}v^{12}-22u^{10}v^2 -22u^2v^{10}
-33\sqrt{5}u^8v^4 -33\sqrt{5}u^4v^8 +44u^6v^6\\ & & p_{I2}=\,&-3(u^{20}+v^{20})
  -38\sqrt{5}(u^{18}v^2+u^2v^{18}) +57(u^{16}v^4+u^4v^{16})\\
  & & &-456\sqrt{5}(u^{14}v^6+u^6v^{14}) +1482(u^{12}v^2+u^2v^{12})
  +988\sqrt{5}u^{10}v^{10}\\ & & p_{I3}=\,&225(u^{29}v-uv^{29})
  +580\sqrt{5}(u^{27}v^3-u^3v^{27}) +15921(u^{25}v^5-u^5v^{25})\\
  & & &-20880\sqrt{5}(u^{23}v^7-u^7v^{29}) +90045(u^{21}v^9-u^9v^{25})\\
  & & &+40020\sqrt{5}(u^{19}v^{11}-u^{11}v^{19})
  +570285(u^{17}v^{13}-u^{13}v^{17}).
\end{align*}

Finding the equations by hand would be rather tedious, so instead we
used the following observation. Let $\psi:\C[x,y,z]\to \C[u,v]$ be the
map defined by sending $x\mapsto p_{*1}$, $y\mapsto p_{*2}$ and $z\mapsto
p_{*3}$, where $*$ is $D$, $T$, $O$ or $I$ respectively. Then the ring
of invariants of the action is isomorphic to
$\C[x,y,z]/\ker\psi$. We then used Singular and OSCAR to calculate a 
generator of $\ker\psi$ in each of the cases. The result is summarized
in the table below.

\begin{center}
\begin{tabular}{lcc}\toprule
Singularity & Map &  Equation of the image \\ \midrule
%$A_k$ & $(u^{k+1},v^{k+1},uv)$ & $xy-z^{k+1}=0$ \\
$D_n$ & %$(u^2v^2,u^{2n}+v^{2n},uv(u^{2n}-v^{2n})$ & 
$(p_{D1},p_{D2},p_{D3})$ & $x(y^2-4x^n)-z^2=0 $\\
$E_6$ & %$(uv^5-u^5v,u^8+v^8+14u^4v^4,u^{12}+v^{12}-33u^8v^4-33u^4v^8)$
 $(p_{T1},p_{T2},p_{T3})$ & $108x^4 -y^3 +z^2= 0$ \\
$E_7$ & %$(u^{10}v^2+u^2v^{10}-2u^6v^6,x^8 +y^8 +14u^4v^4,
%34u^5v^{13}-34u^{13}v^5+u^{17}v-uv^{17})$ 
$(p_{O1},p_{O2},p_{O3})$ & $108x^3-xy^3+z^2 = 0$ \\
$E_8$ & %$\substack{\bigl(\sqrt{5}u^{12}+\sqrt{5}v^{12}-22u^{10}v^2 -22u^2v^{10}
%-33\sqrt{5}u^8v^4 -33\sqrt{5}u^4v^8 +44u^6v^6,\\ -3(u^{20}+v^{20})
 % -38\sqrt{5}(u^{18}v^2+u^2v^{18}) +57(u^{16}v^4+u^4v^{16})\\
  %-456\sqrt{5}(u^{14}v^6+u^6v^{14}) +1482(u^{12}v^2+u^2v^{12})
  %+988\sqrt{5}u^{10}v^{10},\\ 225(u^{29}v-uv^{29})
  %+580\sqrt{5}(u^{27}v^3-u^3v^{27}) +15921(u^{25}v^5-u^5v^{25})
  %-20880\sqrt{5}(u^{23}v^7-u^7v^{29})\\ +90045(u^{21}v^9-u^9v^{25})
  %+40020\sqrt{5}(u^{19}v^{11}-u^{11}v^{19})
  %+570285(u^{17}v^{13}-u^{13}v^{17})\bigr)}$ 
$(p_{I1},p_{I2},p_{I3})$ & $27x^5+25\sqrt{5}y^3+4z^2=0$ \\ \bottomrule 
\end{tabular}
\end{center}

%\begin{example}
%In the case of the fundamental group being $\Z/3\Z\times D_{4\cdot 2}$ we have %the following map $(u^6v^6,u^4v^4(u^4+v^4),(u^4+v^4)^3,uv(u^4+v^4)$ and equatio%ns of the image $yw^2+4xy-xz=0$, $xw^2+4x^2-y^2+0$ and $w^4-16x^2+8y^2-yz=0$.
%\end{example}

\subsection{Maps for the product of Cyclic and Binary polyhedral groups}\label{mapstobinarypolyhedral}

Consider the case of the fundamental group being $\Z/m\Z\times G$
where $G$ is a binary polyhedral group and $m>1$. In this case we cannot find
the groups acting as subgroups of $SU(2)$, since the only finite
subgroups of $SU(2)$ are the binary polyhedral groups. Instead we
will find them acting as subgroups of of $U(2)$. If $A,B$ are
subgroups of some group $C$, then the set $AB=\{ ab\in C\, \vert\,
a\in A\text{ and }b\in B\}$ is a subgroups if either $A$ or $B$ is
normal in $C$. If furthermore $A\cap B = \{ \id \}$ and both $A$ and
$B$ are normal in $AB$, then $AB$ is isomorphic to $A\times B$.

Let $G\subset SU(2)\subset U(2)$ be one of the binary polyhedral groups
considered above, and consider $\Z/m\Z\subset U(2)$ as the subgroup
generated by $\begin{psmallmatrix} e^{2\pi i/m} & 0 \\ 0 & e^{2\pi
    i/m} \end{psmallmatrix}$. Then $\Z/m\Z$ is in the center of $U(2)$
and hence normal so $(\Z/m\Z)G$ is a subgroup of $U(2)$. $\Z/m\Z$ is
still in the center of $(\Z/m\Z)G$ so it follows that $\Z/m\Z$ is
normal in $(\Z/m\Z)G$. Since every element in $(\Z/m\Z)G$ consist of
products of elements that are in $G$ or commutes with $G$ it follows
that $G$ is also normal in $(\Z/m\Z)G$. Lastly, since elements in
$\Z/m\Z$ has order a divisor of $m$, and by construction $m$ is
coprime with the order of $G$ we have that  $(\Z/m\Z)\cap G = \{ \id
\}$. So we get that the group generated by $\begin{psmallmatrix}
  e^{2\pi i/m} & 0 \\ 0 & e^{2\pi i/m} \end{psmallmatrix}$ and the
generators of $G$ is the fundamental group of the link.

To find the invariant polynomials we just have to intersect the
invariant polynomials we found for the given binary polyhedral group
with the invariant polynomials for the given action of $\Z/m\Z$. In
\ref{mapsforcyclicgroups} we found this to be generated by all
monomials on the form $u^av^b$ where $a+b=m$. Notice that in this case
the action is simply given by multiplying each variable by a $m$ root
of unity $\xi_m$. This implies that the action will transform any
monomial of degree $d$ by multiplying with $\xi_m^d$. It will then
of course also transform a homogeneous polynomial of degree $d$ by
multiplying it with $\xi_m^d$. So any homogeneous polynomial is
invariant under the action of $\Z/m\Z$ if and only if it have degree a
multiple of $m$. Furthermore any invariant polynomial will have that
any of its homogeneous components are invariant under the action, and
in particular any monomial in an invariant polynomial have degree a
multiple of $m$.

Let $p_{*1}$, $p_{*2}$ and $p_{*3}$ be the generators of the
invariant polynomials under the action of the give binary polyhedral
group. Let $d_{*i}$ be the degree of $p_{*i}$, then  because $p_{*i}$ is
homogenous the action of the generator of $\Z/m\Z$ will just be
multiplication by $\xi_m^{d_{*i}}$. An invariant polynomial of the form
$p_{*1}^{a_1}p_{*2}^{a_2}p_{*3}^{a_3}$ will be transformed into
$\xi_m^{a_1d_{*1}+a_2d_{*2}+a_3d_{*3}}p_{*1}^{a_1}p_{*2}^{a_2}p_{*3}^{a_3}$, and hence be
invariant if and only if $a_1d_{*1}+a_2d_{*2}+a_3d_{*3}$ is a multiple of
$m$. When we want to find generator we can furthermore assume that
$m^2$ does not divide $a_1d_{*1}+a_2d_{*2}+a_3d_{*3}$. Since any
invariant polynomial under the action of the binary polyhedral group must be
a polynomial in $d_{*1}$, $d_{*2}$ and $d_{*3}$ we get that the
invariant polynomials under the product group is generated by monomial
of the form $p_{*1}^{a_1}p_{*2}^{a_2}p_{*3}^{a_3}$ with
$a_1d_{*1}+a_2d_{*2}+a_3d_{*3}$ a multiple of $m$.

\subsubsection{$G$ is the Binary Dihedral group}
In the case of $D_{4n}^*$ the degrees of the generators are $d_{D1}=4$,
$d_ {D2}=2n$ and $d_{D3}=2n+1$ and we have that $m=(b-1)n-q$ with
$\gcd(m,2n)=1$ and $\gcd(n,q)=1$. So the
equation defining the power of the invariant polynomials is
$4a_1+2na_2+(2n+2)a_3 = lm =l\bigl((b-1)n-q\bigr)$ for some $l$. Since
$m$ is odd we can assume that $l$ is even and replace with $l'=l/2$
and get $2a_1+na_2+(n+1)a_3 = l'm =l'\bigl((b-1)n-q\bigr)$. For given
values of $n$, $b$ and $q$ this equation is not hard to solve.

\begin{example}
Assume that $b=3$, $n=2$ and $q=1$ the equation becomes
$2a_1+2a_2+3a_3=3l'$. The solutions for
$(a_1,a_2,a_3)$ are $(3,0,0)$, $(2,1,0)$, $(1,2,0)$, $(0,3,0)$ and
$(0,0,1)$, and we get the polynomials $(vv)^6$, $(uv)^4(u^4+v^4)$,
$(uv)^2(u^4+v^4)^2$, $(u^4+v^4)^3$ and $uv(u^4-v^4)$. But notice that
$(uv(u^4-v^4))^2+4(uv)^6   = (uv)^2(u^8+v^8-2(uv)^4) +4(uv)^6 =
(uv)^2(u^8+v^8+2(uv)^4)=(uv)^2(u^4+v^4)^2$. So we only have the four
algebraically independent generators $(uv)^6$, $(uv)^4(u^4+v^4)$,
 $(u^4+v^4)^3$ and $uv(u^4-v^4)$. The equations can again be found as
 the kernel of a ring homomorphism which we here have calculated using
 singular and get the equations of the image $yw^2+4xy-xz=0$,
 $xw^2+4x^2-y^2+0$ and $w^4-16x^2+8y^2-yz=0$, for the map $F(u,v) =
 \bigl((uv)^6,(uv)^4(u^4+v^4),(u^4+v^4)^3,uv(u^4-v^4)\bigr)$. 
\end{example}

\subsubsection{$G$ is the Binary Tetrahedral group}
In the case of $T^*$ the degrees of the generating invariant
polynomials are $d_{T1}=6$, $d_{T2}=8$ and $d_{T3}=12$, $m=6b-3-2q_1-2q_2$ and
$\gcd(m,12)=1$. So the equation defining products of $p_{T1}$, $p_{T2}$ and
$p_{T3}$ being invariant under the action of $\Z/m\Z$ is
$6a_1+8a_2+12a_3 = lm =l\bigl(6b-3-2q_1-2q_2\bigr)$. Since $m$ is
always odd we can replace $l$ by $l'=l/2$ and reduce the equation to
$3a_1+4a_2+6a_3 = l'm =l' \bigl(6b-3-2q_1-2q_2\bigr)$. Again for
concrete values of $b$, $q_1$ and $q_2$ solving this is easy.

\begin{example}
Assume that $b=3$, $q_1=1$ and $q_2=1$ the equation becomes
$3a_1+4a_2+6a_3=5l'$. The solutions for
$(a_1,a_2,a_3)$ are $(5,0,0)$, $(3,0,1)$, $(2,1,0)$, $(1,3,0)$,
$(1,0,2)$, $(0,5,0)$, $(0,1,1)$ and $(0,0,5)$. Again this set is not
minimal, since we know that in this case the embedding dimension is
$6$. We have that $p_{p2}^5 = p_{T2}p_{T3} + 108 p_{T1}^2p_{T2}$ and $p_{T3}^5 =
p_{T2}p_{T3}-108P_{T1}^3p_{T3}^3P_{T1}p_{T2}^3 +11664
p_{T1}^5p_{T1}^3p_{T2}$.  So a map with image the singularity with the
given topology is given by 
\begin{align*}
(x_1,x_2,x_3,x_4,x_5,x_6) = (p_{T1}^5, p_{T1}^3p_{T3}, p_{T1}^2p_{T1},
p_{T1}p_{T2}^3, 
p_{T1}p_{T3}^2, p_{T2}p_{T3}). 
\end{align*}Using the computer algebra system
OSCAR we find by the method used earlier that a set of equations for
the image is
\begin{align*}
&x_1x_6 - x_2x_3 = 0,& &108x_1x_3 + x_2x_6 - x_3x_4 = 0,& &108x_1x_2x_4 + x_1x_6^3 - x_2x_4^2 = 0,&\\ &x_2x_4 -
x_3^2x_6 = 0,& &108x_1x_4 + x_3x_6^2 - x_4^2 = 0,& &108x_1x_4^2 + 108x_2^2x_4 + 
x_2x_6^3 - x_4^3 = 0,&\\ &x_1x_4 - x_3^3 =
0,&  &108x_1^2 - x_1x_4 + x_2^2 = 0,&  
&3456x_1x_2x_4 + 3456x_2^3 - 32x_2x_4^2 + x_5^5 = 0.&
\end{align*}  
\end{example}

\subsubsection{$G$ is the Binary Octahedral group}
For the actions involving $O^*$ the degrees of the generating
polynomials are $d_{O1} = 12$, $d_{O2} = 8$, $d_{O3} = 18$ and $m=
12b-6-4q_1-3q_2$. Notice that $\gcd(m,6)=1$ so $m$ is always odd and we
can divide the equation by 2. This makes the equation
defining the powers of the invariant monomials $6a_1+4a_2+9a_3=lm =
l(12b-6-4q_1-3q_2)$. Again for given value of $b$, $q_1$ and $q_2$
solving this is easy.

\begin{example}
Let $b=2$, $q_1=2$ and $q_2=1$. Then the equation defining the powers
is  $6a_1+4a_2+9a_3=7l$. Solutions for $(a_1,a_2,a_3)$ are
$(7,0,0)$, $(4,1,0)$, $(2,0,1)$, $(1,2,0)$, $(1,0,4)$, $(0,7,0)$, 
$(0,3,1)$, $(0,2,3)$, $(0,1,5)$, $(0,0,7)$. Now this is not a minimal
set of generating invariant polynomials. Using computer algebra one
can easily see that 
\begin{align*}
11664p_{O1}^7 =& p_{O1}^3p_{O2}^6 -
108p_{O1}^4p_{O3}^2 - p_{O1}^2p_{O2}^3p_{O3}^2,\\ p_{O1}p_{O3}^4 =&
-108p_{O1}^4p_{O3}^2 + p_{O1}^2p_{O2}^3p_{O3}^2,\\ p_{O2}^2p_{O3}^3 =&
p_{O1}p_{O2}^5p_{O3}^5 - 108 p_{O1}^3p_{O2}^3p_{O3},\\ p_{O2}p_{O3}^5
=&  p_{O1}^2p_{O2}^7p_{O3} -216 p_{O1}^4p_{O2}^4p_{O3} +11664
p_{O1}^6p_{O2}p_{O3},\\ p_{O3}^7 =& p_{O1}^2p_{O2}^3p_{O3}^2 - 216
p_{O1}^4p_{O2}(p_{O1}p_{O2}^5p_{O3}^5 - 108 p_{O1}^3p_{O2}^3p_{O3})
+11664 p_{O1}^6P_{O3}^3.
\end{align*}This eliminates $5$ of the $10$ monomials in
$P_{O1}$, $P_{O2}$ and $P_{O3}$ and since the embedding dimension is
$5$ the remaining $5$ give the generators of the invariant
polynomials. Hence a map is given by:
\begin{align*}
(x_1,x_2,x_3,x_4,x_5) = (p_{O1}^4p_{O2}, p_{O1}^2p_{O3},
p_{O1}p_{O2}^2, p_{O2}^7, p_{O2}^3p_{O3}).
\end{align*}Using OSCAR we calculate the equations of the image to be:
\begin{align*}
&x_2x_4 - x_3^2x_5=0,\  x_1x_5^2 - x_2^2x_4=0,\ x_1x_5 - x_2x_3^2=0,\\
&11664x_1x_3 + 108x_2x_5 - x_3x_4 + x_5^2=0,\\
 &11664x_1^2 - x_1x_4 + 108x_2^2x_3 + x_2x_3x_5=0,\
108x_1x_4x_5 - x_2x_4^2 + x_3x_5^3=0,\\ &11664x_1x_2x_4 + 108x_2x_3x_5^2 -
x_2x_4^2 + x_3x_5^3=0,\ 108x_1x_4 - x_3^2x_4 + x_3x_5^2=0,\\ &11664x_1x_2x_4^2 -
216x_1x_4^2x_5 + x_2x_4^3 - x_5^5=0,\ 108x_2x_3x_4 - x_3x_4x_5 +
x_5^3=0,\\ &11664x_1^2x_4x_5 -
216x_1x_2x_4^2 + x_1x_4^2x_5 - x_2x_5^4=0,\  108x_1x_3 + x_2x_5 -
x_3^3=0, \\ &629856x_1^2x_4 - 54x_1x_4^2 -
54x_2x_5^3 + x_3x_4x_5^2 - x_5^4=0,\\ &136048896x_1^3 -
23328x_1^2x_4 + x_1x_4^2 - 
11664x_2^3x_5 - 216x_2^2x_5^2 - x_2x_5^3=0,\\ &136048896x_1^2x_2x_4 +
23328x_1x_2x_4^2 - 11664x_2^2x_5^3 + x_2x_4^3 - 216x_2x_5^4 - x_5^5=0,\\
&136048896x_1^3x_4 - 23328x_1^2x_4^2 - 11664x_1x_2x_5^3 + x_1x_4^3 +
216x_1x_5^4 - x_2x_4x_5^3 = 0.
\end{align*}
\end{example}

\subsubsection{$G$ is the Binary Icosahedral group}
Lastly for the case of $I^*$ the degrees of the generating polynomials
are $d_{I1}= 12$, $d_{I2}= 20$, $d_{I3}=30$ and $m=30b-15-10q_1-6q_2$. Notice
that $\gcd(m,30)=1$ so $m$ is odd and we can divide by $2$ in the
equation for finding the powers of the invariant monomials. This gives
us $6a_1+20a_2+15a_3=lm=l(30b-15-10q_1-6q_2)$. As in the other cases
finding the solution for given values of  $b$, $q_1$ and $q_2$ should
not be any problem.

%\begin{example}
%Let $b=2$, $q_1=2$ and $q_2=3$. Then the equation defining the powers
%is  $6a_1+20a_2+15a_3=7l$. Solutions for $(a_1,a_2,a_3)$ are
%$(7,0,0)$, $(6,1,0)$, $(5,2,0)$, $(4,3,0)$, $(1,0,1)$, $(0,1,1)$
%\end{example}

\subsection{The Non Binary polyhedral case}

In the case of the group being $D'_{2^{k+2}(2l+1}$ or $T*_{8\cdot
  3^k}$ we cannot find embedding of the group into $SU(2)$
since the only finite subgroups of $SU(2)$ are the cyclic groups and
the binary polyhedral groups. So the representation of the groups have
to be in $U(2)$. These groups are not very studied in the literature,
so we could not find concrete representation already given to
work with. So no example will be given, but the process is the same
find an embedding of the group into $U(2)$ and calculate the invariant
of this action on $\C[u,v]$. Then represent $\Z/m\Z$ by the diagonal
action by $\begin{psmallmatrix}
  e^{2\pi i/m} & 0 \\ 0 & e^{2\pi i/m} \end{psmallmatrix}$ which will
because of the conditions on $m$ commute with the action of
$D'_{2^{k+2}(2l+1}$ or $T*_{8\cdot 3^k}$ so the invariant of the
product action is just the intersection of the invariants of the two
actions.

\bibliography{general}

\def\cprime{$'$}
\providecommand{\bysame}{\leavevmode\hbox to3em{\hrulefill}\thinspace}
\providecommand{\MR}{\relax\ifhmode\unskip\space\fi MR }
% \MRhref is called by the amsart/book/proc definition of \MR.
\providecommand{\MRhref}[2]{%
  \href{http://www.ams.org/mathscinet-getitem?mr=#1}{#2}
}
\providecommand{\href}[2]{#2}
\begin{thebibliography}{MNnB20}

\bibitem[Bri68]{BrieskornRationalSingularitiesOfComplexSurfaces}
Egbert Brieskorn, \emph{Rationale singularit\"{a}ten komplexer fl\"{a}chen.},
  Inventiones mathematicae \textbf{4} (1967/68), 336--358.

\bibitem[DFN85]{DubrovinFomenkoNovikovModernGeometryII}
B.~A. Dubrovin, A.~T. Fomenko, and S.~P. Novikov, \emph{Modern
  geometry---methods and applications. {P}art {II}}, Graduate Texts in
  Mathematics, vol. 104, Springer-Verlag, New York, 1985, The geometry and
  topology of manifolds, Translated from the Russian by Robert G. Burns.
  \MR{807945}

\bibitem[dJvS91]{dejongvanstraten}
T.~de~Jong and D.~van Straten, \emph{Disentanglements}, Singularity theory and
  its applications, {P}art {I} ({C}oventry, 1988/1989), Lecture Notes in Math.,
  vol. 1462, Springer, Berlin, 1991, pp.~199--211. \MR{1129033}

\bibitem[DV44]{duval}
Patrick Du~Val, \emph{On absolute and non absolute singularities of algebraic
  surfaces}, Rev. Fac. Sci. Univ. Istanbul (A) \textbf{11} (1944), 159--215
  (1946). \MR{31286}

\bibitem[Fuk82]{fukuda}
Takuo Fukuda, \emph{Local topological properties of differentiable mappings.
  {I}}, Invent. Math. \textbf{65} (1981/82), no.~2, 227--250. \MR{641129}

\bibitem[Hem76]{hempel}
John Hempel, \emph{{$3$}-{M}anifolds}, Annals of Mathematics Studies, No. 86,
  Princeton University Press, Princeton, NJ; University of Tokyo Press, Tokyo,
  1976. \MR{415619}

\bibitem[Hop26]{hopfOnTheClifford-KleinSpaceProblem}
Heinz Hopf, \emph{Zum {C}lifford-{K}leinschen {R}aumproblem}, Math. Ann.
  \textbf{95} (1926), no.~1, 313--339. \MR{1512281}

\bibitem[JN83]{brandies}
Mark Jankins and Walter~D. Neumann, \emph{Lectures on {S}eifert manifolds},
  Brandeis Lecture Notes, vol.~2, Brandeis University, Waltham, MA, 1983.
  \MR{MR741334 (85j:57015)}

\bibitem[Joh79]{johannson}
Klaus Johannson, \emph{Homotopy equivalences of {$3$}-manifolds with
  boundaries}, Lecture Notes in Mathematics, vol. 761, Springer, Berlin, 1979.
  \MR{551744}

\bibitem[JS79]{jacosalen}
William~H. Jaco and Peter~B. Shalen, \emph{Seifert fibered spaces in
  {$3$}-manifolds}, Mem. Amer. Math. Soc. \textbf{21} (1979), no.~220,
  viii+192. \MR{539411}

\bibitem[Lau72]{lauferrational}
Henry~B. Laufer, \emph{On rational singularities}, Amer. J. Math. \textbf{94}
  (1972), 597--608. \MR{0330500 (48 \#8837)}

\bibitem[Lau73]{laufertauttwodsing}
\bysame, \emph{Taut two-dimensional singularities}, Math. Ann. \textbf{205}
  (1973), 131--164. \MR{333238}

\bibitem[MNnB20]{mondnunoballesterossingularitiesofmappings}
David Mond and Juan~J. Nu\~{n}o Ballesteros, \emph{Singularities of
  mappings---the local behaviour of smooth and complex analytic mappings},
  Grundlehren der mathematischen Wissenschaften [Fundamental Principles of
  Mathematical Sciences], vol. 357, Springer, Cham, [2020] \copyright 2020.
  \MR{4321457}

\bibitem[Mon85]{mond1}
David Mond, \emph{On the classification of germs of maps from
  {${\mathbbm{R}}^2$} to {${\mathbbm{R}}^3$}}, Proc. London Math. Soc. (3)
  \textbf{50} (1985), no.~2, 333--369. \MR{772717}

\bibitem[Mon95]{mond2}
\bysame, \emph{Looking at bent wires---{${\mathscr{A}}_e$}-codimension and the
  vanishing topology of parametrized curve singularities}, Math. Proc.
  Cambridge Philos. Soc. \textbf{117} (1995), no.~2, 213--222. \MR{1307076}

\bibitem[Mum65]{mumfordgit}
D.~Mumford, \emph{Geometric invariant theory}, Ergeb. Math. Grenzgeb., vol.~34,
  Springer-Verlag, Berlin, 1965 (English).

\bibitem[N\'22]{nemethinormalsurfacesing}
Andr\'{a}s N\'{e}methi, \emph{Normal surface singularities}, Ergebnisse der
  Mathematik und ihrer Grenzgebiete. 3. Folge. A Series of Modern Surveys in
  Mathematics [Results in Mathematics and Related Areas. 3rd Series. A Series
  of Modern Surveys in Mathematics], vol.~74, Springer, Cham, [2022] \copyright
  2022. \MR{4510934}

\bibitem[Neu81]{plumbing}
Walter~D. Neumann, \emph{A calculus for plumbing applied to the topology of
  complex surface singularities and degenerating complex curves}, Trans. Amer.
  Math. Soc. \textbf{268} (1981), no.~2, 299--344. \MR{MR632532 (84a:32015)}

\bibitem[Neu07]{Neu07}
\bysame, \emph{Graph 3-manifolds, splice diagrams, singularities}, Singularity
  theory, World Sci. Publ., Hackensack, NJ, 2007, pp.~787--817. \MR{MR2342940
  (2008k:32085)}

\bibitem[Orl72]{orlik}
Peter Orlik, \emph{Seifert manifolds}, Lecture Notes in Mathematics, Vol. 291,
  Springer-Verlag, Berlin, 1972. \MR{MR0426001 (54 \#13950)}

\bibitem[Ped11]{myarticle}
Helge~M{\o}ller Pedersen, \emph{Splice diagram determining singularity links
  and universal abelian covers}, Geom. Dedicata \textbf{150} (2011), 75--104.
  \MR{2753699}

\bibitem[Pri67]{PrillLocalClasificationsOfQuotients}
David Prill, \emph{Local classification of quotients of complex manifolds by
  discontinuous groups}, Duke Math. J. \textbf{34} (1967), 375--386.
  \MR{210944}

\bibitem[Sco83]{scott}
Peter Scott, \emph{The geometries of {$3$}-manifolds}, Bull. London Math. Soc.
  \textbf{15} (1983), no.~5, 401--487. \MR{MR705527 (84m:57009)}

\bibitem[Sei33]{seifertTopologyofThreeDimensionalFibreSpaces}
H.~Seifert, \emph{Topologie {D}reidimensionaler {G}efaserter {R}\"aume}, Acta
  Math. \textbf{60} (1933), no.~1, 147--238. \MR{1555366}

\bibitem[Wal67]{waldhausen}
Friedhelm Waldhausen, \emph{Eine {K}lasse von {$3$}-dimensionalen
  {M}annigfaltigkeiten. {I}, {II}}, Invent. Math. \textbf{3} (1967), 308--333;
  ibid. 4 (1967), 87--117. \MR{235576}

\end{thebibliography}

\end{document}